\newcommand{\quot}{/\kern-0.2em/}
\definecolor{DarkRed}{RGB}{200,20,20}
\definecolor{DarkGreen}{RGB}{0,120,0}
\definecolor{SkyBlue}{rgb}{0.16, 0.32, 0.75}
\newtheorem{theorem}{Theorem}[section]
\newtheorem{proposition}[theorem]{Proposition}
\newtheorem{lemma}[theorem]{Lemma}
\theoremstyle{definition}
\newtheorem{definition}[theorem]{Definition}
\newtheorem{remark}[theorem]{Remark}
\newtheorem*{acknowledgments}{Acknowledgments}
\newcommand*{\rom}[1]{\expandafter\@slowromancap\romannumeral #1@}
\renewcommand{\epsilon}{\varepsilon}
\newcommand{\Mcr}{\overline{M}^{\mathrm{cr}}}
\newcommand{\Msp}{\overline{M}^{\mathrm{sp}}_{\bf b}}
\newcommand{\stackMsp}{\overline{\mathcal{M}}^{\mathrm{sp}}_{\bf b}}
\newcommand{\T}{\mathcal{T}}
\newcommand*{\sHom}{\mathcal{H}\kern -.5pt om}
\newcommand*{\sExt}{\mathcal{E}\kern -.5pt xt}
\crefname{section}{}{sections}
\crefname{lemma}{Lemma}{lemmas}
\crefname{enumi}{}{items}
\crefname{remark}{Remark}{remarks}
\crefname{theorem}{Theorem}{theorems}
\crefname{definition}{Definition}{definitions}
\crefname{proposition}{Proposition}{propositions}
\crefname{figure}{Figure}{figures}
\crefname{conjecture}{Conjecture}{conjectures}
\crefname{lemma}{Lemma}{Lemmas}
\crefname{proposition}{Proposition}{propositions}
\crefname{figure}{Figure}{figures}
\crefname{corollary}{Corollary}{corollaries}
\crefname{conjecture}{Conjecture}{conjectures}
\crefname{section}{Section}{Sections}
\crefname{subsection}{Section}{Sections}
\author{Hanlong Fang}
\address{H.~Fang, Assistant Professor, School of Mathematical Sciences, Peking University, 100871, Beijing, China}
\email{hanlongfang@math.pku.edu.cn}
\author{Luca Schaffler}
\address{L.~Schaffler, Assistant Professor, Dipartimento di Matematica e Fisica, Universit\`a degli Studi Roma Tre, Largo San Leonardo Murialdo 1, 00146, Roma, Italy}
\email{luca.schaffler@uniroma3.it}
\author{Xian Wu}
\address{X.~Wu, Visiting Scholar, School of Mathematical Sciences, Peking University, 100871, Beijing, China}
\email{xianwu.ag@gmail.com} 
\title{Fineness and smoothness of a KSBA moduli of marked cubic surfaces}
\subjclass{14J10, 14D06, 14D23}
\keywords{moduli space, compactification, cubic surface, stable pair}
\begin{document}

\begin{abstract}
By work of Gallardo--Kerr--Schaffler, it is known that Naruki’s compactification of the moduli space of marked cubic surfaces is isomorphic to the normalization of the Koll\'ar, Shepherd-Barron, and Alexeev compactification parametrizing pairs $\left(S,\left(\frac{1}{9}+\epsilon\right)D\right)$, with $D$ the sum of the $27$ marked lines on $S$, and their stable degenerations. In the current paper, we show that the normalization assumption is not necessary as we prove that this KSBA compactification is smooth. Additionally, we show it is a fine moduli space. This is done by studying the automorphisms and the $\mathbb{Q}$-Gorenstein obstructions of the stable pairs parametrized by it.
\end{abstract}

\maketitle


\section{Introduction}

In the 19th century, Cayley and Salmon discovered that a smooth complex cubic surface in $\mathbb{P}^3$ contains exactly $27$ lines \cite{cayley1849triple}. This classical result in algebraic geometry is at the center of many of the geometric properties of a cubic surface. For instance, Naruki showed that the isomorphism class of a cubic surface can be reconstructed from the cross-ratios of the quadruplets of collinear tritangents (planes that intersect the cubic surface in three lines), and used this information to construct a smooth and simple normal crossing compactification, here denoted by $\Mcr$, of the moduli space $M$ of marked cubic surfaces \cite{naruki1982cross}.

More recently, the study of compactifications of the moduli space of cubic surfaces has drawn a lot of attention (see for instance \cite{allcock2002complex,casalaina2024non}). In particular, Naruki's compactification has been revisited and reinterpreted in view of new advances within moduli theory. In this sense, our starting point is the compactification theory for moduli spaces of higher dimensional algebraic varieties originated by work of Koll\'ar, Shepherd-Barron, and Alexeev, which is nowadays referred to as the \emph{KSBA compactification} \cite{kollar1988threefolds,Alexeev+1996+1+22}. In particular, one can consider the KSBA compactification $M\subseteq\overline{M}^{\mathrm{snc}}$ parametrizing stable pairs $(S,D)$, where $S$ is a smooth cubic surface and $D$ is the sum of the $27$ marked lines with simple normal crossings, and their stable degenerations. Then, Hacking--Keel--Tevelev proved that $\Mcr$ is isomorphic to the log canonical model of $\overline{M}^{\mathrm{snc}}$ \cite[Theorem~1.5]{hacking2009stable}.

Additionally, it was conjectured in \cite[Remark~1.3~(4)]{hacking2009stable} that $\Mcr$ admits a modular interpretation in terms of some other stable pairs $(S,\mathbf{b}D)$, where $\mathbf{b}D:=\sum_{i=1}^{27}\left(\frac{1}{9}+\epsilon\right)D_i$, $\epsilon\in\mathbb{Q}$, $0<\epsilon\ll 1$. This was confirmed by Gallardo--Kerr--Schaffler \cite[Theorem~1.5]{gallardo2021geometric}. More precisely, they showed that $\Mcr$ is isomorphic to the normalization of the KSBA compactification $\Msp$ of the moduli space of pairs $(S,\mathbf{b}D)$. The fundamental observation is that the family $(\overline{\mathcal{S}},\overline{\mathcal{D}})\rightarrow\Mcr$ constructed by Naruki and Sekiguchi \cite{naruki1980modification,naruki1982cross} is actually a family of stable pairs for the weight $\frac{1}{9}+\epsilon$ (see \cite[Proposition~5.12]{gallardo2021geometric}). Later on, the study of the wall crossings between $\Msp$ and $\overline{M}^{\mathrm{snc}}$ was carried out by Schock in \cite{schock2023moduli}.

The goal of the current paper is twofold. By computing the automorphisms of the stable pairs parametrized by $\Msp$ and their $\mathbb{Q}$-Gorenstein obstructions, we prove the following result.

\begin{theorem}[{=\cref{thm:fiberwise_argument}+\cref{thm:smoothstack}}]
\label{Thm:main}
For all stable pairs $(S,\mathbf{b}D)$ parametrized by the KSBA compactification $\Msp$ we have that:
\begin{enumerate}[label=$\mathrm{(\arabic*)}$]
    \item $(S,\mathbf{b}D)$ has trivial automorphism group;
    \item The $\mathbb{Q}$-Gorenstein obstruction space of $(S,\mathbf{b}D)$ vanishes. In particular,
    \[
    H^2(\mathcal{T}_S^0)=H^1(\mathcal{T}_S^1)=H^0(\mathcal{T}_S^2)=0,
    \]
    where $\mathcal{T}_S^0,\mathcal{T}_S^1,\mathcal{T}_S^2$ are respectively the tangent, deformation, and obstruction sheaves (see \cref{def:ordinarydefobs}).
\end{enumerate}
Therefore, combining the two results above, the KSBA compactification $\Msp$ is a fine and smooth moduli space.
\end{theorem}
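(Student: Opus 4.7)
The plan is to use the explicit Naruki--Sekiguchi family $(\overline{\mathcal{S}},\overline{\mathcal{D}})\to\Mcr$ recalled in the introduction: since $\Mcr$ surjects onto $\Msp$ (being its normalization), every closed point of $\Msp$ is represented by a fiber of this family. I would therefore verify (1) and (2) fiber by fiber, working along the Naruki stratification of $\Mcr$. Because each stratum has a well understood explicit model (generic smooth cubics on the interior, and the degenerate surfaces tabulated in \cite{naruki1982cross,gallardo2021geometric} on the boundary), both claims reduce to a finite list of case checks.

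For (1), the key observation is that the divisor $\overline{\mathcal{D}}$ comes with $27$ labelled components, so any automorphism of the pair $(S,\mathbf{b}D)$ must preserve every $D_i$ individually. On the open part of $\Mcr$, $S$ is smooth and the triviality of such automorphisms is the classical statement that $W(E_6)$ acts freely on markings of a smooth cubic surface. On each boundary stratum I would read off from \cite{gallardo2021geometric} the degenerate surface $S$ and the configuration of the $27$ labelled lines, compute $\mathrm{Aut}(S)$, and check that the subgroup preserving every label is trivial (typically forced by the richness of the labelled configuration even when $\mathrm{Aut}(S)$ itself is nontrivial).

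For (2), on the interior $\T^1_S=\T^2_S=0$ so only $H^2(T_S(-\log D))$ needs to vanish, which follows from Serre duality together with the log-canonical positivity of $K_S+D$. On boundary strata, $S$ acquires semi-log-canonical singularities of a controlled type (again from the explicit models); at each such singularity I would compute $\T^1_S$ and $\T^2_S$ locally in the $\mathbb{Q}$-Gorenstein sense, then assemble global vanishing of $H^1(\T^1_S)$ and $H^0(\T^2_S)$ by a local-to-global spectral sequence, while $H^2(\T^0_S)$ is again handled by a log cohomology computation extended to the singular setting.

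Combining (1) and (2), the stack $\stackMsp$ has trivial stabilizers and unobstructed $\mathbb{Q}$-Gorenstein deformations everywhere, hence is a smooth algebraic space that coincides with its coarse moduli space $\Msp$; in particular the normalization map $\Mcr\to\Msp$ is an isomorphism. The main obstacle is the local computation of $\T^1_S,\T^2_S$ at the deeper boundary strata, where $S$ can be reducible with non-isolated singular locus: the $\mathbb{Q}$-Gorenstein deformation spaces must be computed using the index-one cover rather than by ordinary deformation theory, and ensuring the simultaneous vanishing of $H^0(\T^2_S)$ and $H^1(\T^1_S)$ across every stratum is the technical heart of the argument.
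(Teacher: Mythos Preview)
Your outline follows the paper's overall strategy---case analysis along the Naruki stratification for (1), and the local-to-global spectral sequence $H^p(\T^q_S)\Rightarrow T^{p+q}_S$ for (2)---but you misidentify where the difficulties lie and miss two simplifications that collapse most of what you call the ``technical heart.'' Every surface $S$ appearing in $\Msp$ is a cubic hypersurface in $\mathbb{P}^3$ (either an irreducible cubic with at worst $A_1$ points, or the union of three coordinate planes), hence LCI and in particular Gorenstein. Thus the index-one cover is the identity and $\mathbb{Q}$-Gorenstein deformation theory coincides with the ordinary one; the worry you raise in your last paragraph is vacuous here. The LCI property also gives $\T^2_S=0$ outright, so $H^0(\T^2_S)=0$ is immediate rather than a technical obstacle.

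The paper also does not compute with the log tangent sheaf $T_S(-\log D)$. Instead it proves that $D$ is Cartier and $H^1(S,\mathcal{O}_S(D))=0$ (using ampleness of $D$ and a Du~Bois-type vanishing), which by Hacking's argument implies that every infinitesimal deformation of $S$ extends to one of $(S,D)$; hence unobstructedness of the pair reduces to unobstructedness of $S$ alone, and this is why the stated vanishings involve the non-log sheaves $\T^i_S$. Your log-cohomology route might be made to work on the interior, but it does not directly yield the identities in the theorem and is awkward on the reducible type-$N$ surfaces. The actual work is concentrated in the type-$N$ case: there $\T^1_S\cong\mathcal{O}_\Delta(S)$ on the $1$-dimensional double locus $\Delta$, and $H^1$ of this ample line bundle vanishes by a toric argument; $H^2(\T^0_S)=0$ is obtained from a componentwise short exact sequence reducing to $H^0(\Omega^1_{\mathbb{P}^2}(-1))=0$. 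On the normal ($A_1^k$) strata, $\T^1_S$ has $0$-dimensional support and $H^2(\T^0_S)=0$ follows from a standard lemma of Manetti for normal surfaces with $q=p_g=0$ and effective $-K_S$.
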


In particular, Naruki's compactification $\Mcr$ is isomorphic to the KSBA compactification $\Msp$, without need of normalizing the latter. In complete generality, the KSBA compactification of a moduli space is not expected to be smooth, and it may have arbitrary singularity types \cite{mnev1985varieties,vakil2006murphy}. For instance, this is the case for the moduli space of hyperplane arrangements \cite{keel2006geometry,hacking2006compactification,alexeev2015moduli}. So, the smoothness of the KSBA compactification $\Msp$ stands out in this sense. To prove this, we work with the corresponding moduli stack $\stackMsp$ and we show that it is smooth using Hacking's ideas in \cite{hacking2004compact}, which originated in the work of Illusie \cite{illusie1971complexe,illusie1972complexe,Illusie1972cotangent}. The claimed smoothness is implied by appropriate cohomological calculations that involve the cotangent complex of the degenerations being parametrized: see \cref{sec:smooth-stack}. This strategy was also used by Deopurkar--Han in \cite{deopurkar2021stable} for non-normal degenerations with double normal crossings. Independently, the local smoothness of a moduli stack of stable surfaces at some degenerations with pinch points was proved in \cite{fantechi2022smoothing} by Fantechi--Franciosi--Pardini.

Likewise, in general, the KSBA compactification of a moduli space is not expected to be a fine moduli space. This is due to the presence of nontrivial automorphisms of the objects parametrized by the compactification. However, the marking of the $27$ lines prevents such automorphisms to appear: we discuss this is the proof of \cref{prop:auto}.

We work over the field of complex numbers.

\begin{acknowledgments}
We would like to thank Patricio Gallardo, Edoardo Sernesi, and Filippo Viviani for helpful conversations. H. Fang and X. Wu are supported by National Key R\&D Program of China under Grant No.2022YFA1006700 and NSFC-12201012. X.Wu previously was also supported by OPUS grant National Science Centre, Poland grant UMO-2018/29/BST1/01290. L. Schaffler was supported by the projects ``Programma per Giovani Ricercatori Rita Levi Montalcini'', PRIN2020KKWT53 ``Curves, Ricci flat Varieties and their Interactions'', and PRIN 2022 ``Moduli Spaces and Birational Geometry'' – CUP E53D23005790006. L. Schaffler is a member of the INdAM group GNSAGA.
\end{acknowledgments}


\section{Fineness of the moduli space}
\label{sec:fine-moduli-space}

\subsection{Preliminary definitions}

We begin by briefly recalling from \cite[Section~5.2]{gallardo2021geometric} the definitions of the main objects of interest in the current paper: the moduli stack $\stackMsp$ and its coarse moduli space $\Msp$. For the background on stable pairs in the sense of the minimal model program and their moduli, we refer to \cite{kollar2013singularities,kollar2023families}. For the reader's convenience, first we recall the definition of Viehweg's moduli stack adopted in \cite[Definition~5.3]{gallardo2021geometric}.

\begin{definition}
Let us fix $d,N\in\mathbb{Z}_{>0}$, $C\in\mathbb{Q}_{>0}$, and a weight vector $\mathbf{b}=(b_1,\ldots,b_n)$ with $b_i\in(0,1]\cap\mathbb{Q}$ and $Nb_i\in\mathbb{Z}$ for all $i\in\{1,\ldots,n\}$. For a reduced complex scheme $S$, let $\overline{\mathcal{V}}(S)$ be the set of proper flat families $(\mathcal{X},\mathcal{D}=\sum_ib_i\mathcal{D}_i)\rightarrow S$ such that:
\begin{enumerate}

\item For all $i\in\{1,\ldots,n\}$, $\mathcal{D}_i$ is a closed subscheme of codimension one such that $\mathcal{D}_i\rightarrow S$ is flat at the generic points of $\mathcal{X}_s\cap\mathrm{Supp}(\mathcal{D}_i)$ for every $s\in S$;

\item Every geometric fiber $(X,D)$ is a stable pair, i.e., $(S,\mathbf{b}D)$ is semi-log canonical \cite[Definition–Lemma~5.10]{kollar2013singularities} and the $\mathbb{Q}$-Cartier divisor $K_S+\mathbf{b}D$ is ample. Moreover, $(X,D)$ satisfies $\dim(X)=d$ and $(K_X+D)^d=C$;

\item There is an invertible sheaf $\mathcal{L}$ on the family $\mathcal{X}$ such that, for every geometric fiber $(X,D)$, we have that $\mathcal{L}|_X\cong\mathcal{O}_X(N(K_X+D))$.

\end{enumerate}
A stack $\overline{\mathcal{V}}$ as above is called \emph{Viehweg's moduli stack}. We have that $\overline{\mathcal{V}}$ is a separated Deligne--Mumford stack which admits a projective coarse moduli space.
\end{definition}

\begin{definition}
\label{def:moduli-stack}
Let $\overline{\mathcal{V}}$ be the Viehweg's moduli stack for $d=2$, $\mathbf{b}=(\frac{1}{9}+\epsilon,\ldots,\frac{1}{9}+\epsilon)\in\mathbb{Q}^{27}$ with $0<\epsilon\ll1$, and $C=243\epsilon^2$ (see \cite[Definition~5.3]{gallardo2021geometric}). Let us consider the pairs $(S,D)$, where $S\subseteq\mathbb{P}^3$ is a smooth cubic surface whose $27$ lines have simple normal crossings and $D=\sum_{i=1}^{27}D_i$ is the sum of these marked lines. Then, the pairs
\[
\left(S,\mathbf{b}D:=\sum_{i=1}^{27}\left(\frac{1}{9}+\epsilon\right)D_i\right)
\]
are stable. Moreover, $(K_S+\mathbf{b}D)^2=243\epsilon^2$ and the pairs $(S,\mathbf{b}D)$ vary in a family $(\mathcal{S},\mathbf{b}\mathcal{D})\rightarrow M$ over a quasi-projective moduli space with pairwise non-isomorphic fibers (see \cite[Remark~5.5]{gallardo2021geometric}). As the pairs $(S,\mathbf{b}D)$ are parametrized by the Viehweg's moduli stack $\overline{\mathcal{V}}$, this family induces a morphism $M\rightarrow\overline{\mathcal{V}}$, and we denote by $\stackMsp$ the closure of the image of $M$ inside $\overline{\mathcal{V}}$. We let $\Msp$ be the coarse moduli space associated to $\stackMsp$.
\end{definition}

\begin{remark}
By the work in \cite[Theorem~1.1]{schock2023moduli}, we have that the KSBA compactification $\Msp$ stays the same and the objects parametrized by it remain stable if we change the weight vector $\mathbf{b}=\left(\frac{1}{9}+\epsilon,\ldots,\frac{1}{9}+\epsilon\right)\in\mathbb{Q}^{27}$ to $(c,\ldots,c)\in\mathbb{Q}^{27}$ for all $c\in\left(\frac{1}{9},\frac{1}{6}\right]$.
\end{remark}

\begin{remark}
To ease the correspondence between different notations being used, in \cite{gallardo2021geometric}, the moduli spaces $\Mcr$, $\Msp$, and the stack $\stackMsp$ are denoted by $\overline{\mathbf{N}}$, $\overline{\mathbf{Y}}_{\frac{1}{9}+\epsilon}$, and $\overline{\mathcal{Y}}_{\frac{1}{9}+\epsilon}$, respectively.
\end{remark}

\subsection{The stable pairs parametrized by \texorpdfstring{$\stackMsp$}{Lg}}\label{Subsection:stable_pairs_para}

We prove that the KSBA compactification $\Msp$ is a fine moduli space by showing that every object parametrized by it has the identity as its only automorphism.

The stable pairs parametrized by $\Msp$ are precisely the fibers of the family $(\overline{S},\mathbf{b}\overline{\mathcal{D}})$ of marked cubic surfaces over Naruki's compactification $\Mcr$ in \cite[Sections~5.3, 5.4]{gallardo2021geometric}. The line arrangement $D$ on $S$ are partially pictured in \cref{Fig:line_arrangement} for all possible degeneration types of $S$. Following \cite[Theorem~1.5]{gallardo2021geometric}, the degeneration type of a pair $(S,\mathbf{b}D)$ is defined via the boundary divisors of $\Mcr$ as follows:
\begin{enumerate}

\item There are $36$ divisors associated to the $A_1$-subroot systems of $E_6$. The intersection of $k$ such divisors parametrizes pairs $(S,\mathbf{b}D)$ such that $S$ is a normal cubic surfaces with $k$ $A_1$-singularities. We call such pairs of \emph{type $A_1^k$}.

\item There are $40$ divisors associated to the $A_2^3$-subroot systems of $E_6$, which are pairwise disjoint. Generically, they parametrize stable pairs of \emph{type $N$}, which are given by $(S,\mathbf{b}D)$, where $S=S_0\cup S_1\cup S_2\subseteq\mathbb{P}^3$, $S_i=V(x_i)$ for $i=0,1,2$, and the $27$ lines are distinct. We say that $(S,\mathbf{b}D)$ is of \emph{type $(A_1^k,N)$} if it is parametrized by the intersection of one type $A_2^3$ divisor with $k$ type $A_1$ divisors.

\end{enumerate}
Solid, dashed, and dotted lines in \cref{Fig:line_arrangement} are explained as follows:
\begin{enumerate}

\item A thin solid line denotes a reduced line on $S$.

\item A dashed line denotes a line on $S$ with multiplicity $2$.

\item A dotted line denotes a line on $S$ with multiplicity $4$.

\item A black dot denotes an $A_1$-singularity.

\item In the second row, there are three thick lines in each of the four pictures, which denote the coordinate lines $\Delta_0=V(x_1,x_2),\Delta_1=V(x_0,x_2)$, and $\Delta_2=V(x_0,x_1)$. The areas between two thick lines denote appropriate coordinate planes.

\end{enumerate}

\begin{figure}[h]
    \centering
    \begin{tabular}{cccc}

        \begin{minipage}{0.23\textwidth}
            \centering
            \vspace{5pt}
            \begin{tikzpicture}[scale=0.5]
                \fill (0,0) circle (5pt);
                \foreach \a in {10, 42, 74, 106, 138, 170} {
                    \draw[dashed, line width=0.8pt] (0,0) -- (\a:3);
                }
            \end{tikzpicture}
        \end{minipage} &
        
        \begin{minipage}{0.23\textwidth}
            \centering
            \vspace{5pt}
            \begin{tikzpicture}[scale=0.5]
                \draw[dotted, line width=0.8pt] (-3,0) -- (3,0);
                \foreach \x in {-2,2} {
                    \fill (\x,0) circle (5pt);
                    \foreach \a in {60,80,100,120} {
                        \draw[dashed, line width=0.8pt] (\x,0) -- +(\a:3);
                    }
                }
            \end{tikzpicture}
        \end{minipage} &
        
        \begin{minipage}{0.23\textwidth}
            \centering
            \vspace{5pt}
            \begin{tikzpicture}[scale=0.5]
                \pgfmathsetmacro{\side}{6/sqrt(3)}
                \coordinate (A) at (-\side/2,0);
                \coordinate (B) at (\side/2,0);
                \coordinate (C) at (0,3);
                \draw[dotted, line width=0.8pt] (A) -- (B) -- (C) -- cycle;
                \foreach \v in {A, B, C} {
                    \fill (\v) circle (5pt);
                }
                \draw[dashed, line width=0.8pt] (A) -- +(20:1.5) (A) -- +(40:1.5);
                \draw[dashed, line width=0.8pt] (B) -- +(140:1.5) (B) -- +(160:1.5);
                \draw[dashed, line width=0.8pt] (C) -- +(-80:1.5) (C) -- +(-100:1.5);
            \end{tikzpicture}
        \end{minipage} &
        
        \begin{minipage}{0.23\textwidth}
            \centering
            \vspace{5pt}
            \begin{tikzpicture}[scale=0.5]
                \foreach \y in {0,2} {
                    \draw[dotted, line width=0.8pt] (-3,\y) -- (3,\y);
                    \fill (-2,\y) circle (5pt) (2,\y) circle (5pt);
                }
                \draw[dotted, line width=0.8pt] (-3,-0.5) -- (3,2.5) (-3,2.5) -- (3,-0.5);
                \draw[dotted, line width=0.8pt] (-2,-1) -- (-2,3) (2,-1) -- (2,3);
            \end{tikzpicture}
        \end{minipage} \\
        \vspace{5pt}
        $A_1$ & $A_1^2$ & $A_1^3$ & $A_1^4$\\

        \begin{minipage}{0.23\textwidth}
            \centering
            \vspace{5pt}
            \begin{tikzpicture}[scale=0.5]
                \draw[line width=1.5pt] (0,0) -- (0,4);                
                \draw[line width=1.5pt] (0,0) -- (-30:4);                
                \draw[line width=1.5pt] (0,0) -- (210:4);
                \foreach \i in {1,2,3} {
                    \foreach \j in {1,2,3} {
                        \draw (0,\i) -- (-30:\j) (0,\i) -- (210:\j) (-30:\i) -- (210:\j);
                    }
                }
            \end{tikzpicture}
        \end{minipage} &
        
        \begin{minipage}{0.23\textwidth}
            \centering
            \vspace{5pt}
            \begin{tikzpicture}[scale=0.5]
                \draw[line width=1.5pt] (0,0) -- (0,4);                
        \draw[line width=1.5pt] (0,0) -- (-30:4);                
        \draw[line width=1.5pt] (0,0) -- (210:4);
        \coordinate (P1) at (0,1);
        \coordinate (P2) at (0,2);
        \coordinate (P3) at (0,3);

        \coordinate (Q1) at (-30:1);
        \coordinate (Q2) at (-30:2);
        \coordinate (Q3) at (-30:3);

        \coordinate (R1) at (210:1);
        \coordinate (R2) at (210:2);
        \coordinate (R3) at (210:3);

        \foreach \p in {P1, P2, P3} {
            \foreach \q in {Q1, Q2, Q3} {
                \draw (\p) -- (\q);
            }
            \foreach \r in {R1} {
                \draw (\p) -- (\r);
            }
        }
        \foreach \q in {Q1, Q2, Q3} {
            \foreach \r in {R1} {
            \draw (\q) -- (\r);
            }
        }
        \foreach \p in {P1, P2, P3} {
            \foreach \r in {R3} {
                \draw [dashed] (\p) -- (\r);
            }
        }
        \foreach \q in {Q1, Q2, Q3} {
            \foreach \r in {R3} {
                \draw [dashed] (\q) -- (\r);
            }
        }
            \end{tikzpicture}
        \end{minipage} &
        
        \begin{minipage}{0.23\textwidth}
            \centering
            \vspace{5pt}
            \begin{tikzpicture}[scale=0.5]
                \draw[line width=1.5pt] (0,0) -- (0,4);                
        \draw[line width=1.5pt] (0,0) -- (-30:4);                
        \draw[line width=1.5pt] (0,0) -- (210:4);

        \coordinate (P1) at (0,1);
        \coordinate (P2) at (0,2);
        \coordinate (P3) at (0,3);

        \coordinate (Q1) at (-30:1);
        \coordinate (Q2) at (-30:2);
        \coordinate (Q3) at (-30:3);

        \coordinate (R1) at (210:1);
        \coordinate (R2) at (210:2);
        \coordinate (R3) at (210:3);

        \foreach \p in {P1, P2, P3} {
            \foreach \q in {Q1} {
                \draw (\p) -- (\q);
            }
            \foreach \r in {R1} {
                \draw (\p) -- (\r);
            }
        }
        \foreach \p in {P1, P2, P3} {
            \foreach \r in {R3} {
                \draw [dashed] (\p) -- (\r);
            }
        }
        \foreach \p in {P1, P2, P3} {
            \foreach \r in {Q3} {
                \draw [dashed] (\p) -- (\r);
            }
        }
        \draw [dashed] (Q1) -- (R3);
        \draw [dashed] (Q3) -- (R1);
        \draw [dotted, line width=0.8pt] (Q3) -- (R3);
        \draw (210:1) -- (-30:1);
        \end{tikzpicture}
        \end{minipage} &
        
        \begin{minipage}{0.23\textwidth}
            \centering
            \vspace{5pt}
            \begin{tikzpicture}[scale=0.5]
                \draw[line width=1.5pt] (0,0) -- (0,4);                
                \draw[line width=1.5pt] (0,0) -- (-30:4);                
                \draw[line width=1.5pt] (0,0) -- (210:4);

                \draw (0,1) -- (210:1) -- (-30:1) -- cycle;
                \draw[dashed] (0,1) -- (210:3) -- (-30:1) -- (0,3) -- (210:1) -- (-30:3) -- cycle;
                \draw[dotted, line width=0.8pt] (0,3) -- (-30:3) -- (210:3) -- cycle;
            \end{tikzpicture}
        \end{minipage} \\
        \vspace{5pt}
        $N$ & $(A_1,N)$ & $(A_1^2,N)$ & $(A_1^3,N)$\\

    \end{tabular}
    \caption{The limits of the $27$ lines on the degenerations parametrized by $\Msp$. The pictures are extracted from \cite[Table~1]{gallardo2021geometric}.}
    \label{Fig:line_arrangement}
\end{figure}

\subsection{Proof of fineness}

\begin{definition}
Let $(S,\mathbf{b}D)$ be a stable pair parametrized by $\Msp$. We denote by $\mathrm{Aut}(S,D)$ the automorphism group of the pair $(S,\mathbf{b}D)$. That is, $\mathrm{Aut}(S,D)$ is the subgroup of automorphism $f\in\mathrm{Aut}(S)$ such that $f(D_i)=D_i$ for all $i=1,\ldots,27$.
\end{definition}

\begin{proposition}
\label{prop:auto}
Let $(S,\mathbf{b}D)$ be a stable pair parametrized by $\Msp$. Then, we have
\[
\mathrm{Aut}(S,D)=\{\mathrm{id}_S\}.
\]
\end{proposition}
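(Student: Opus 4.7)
The plan is to argue by case analysis on the degeneration types of $(S,\mathbf{b}D)$ enumerated in \cref{Subsection:stable_pairs_para}, using in each case that the hypothesis $f\in\mathrm{Aut}(S,D)$ forces $f$ to fix every marked line $D_i$ set-wise (not merely the whole divisor $D$), and hence to fix every pairwise intersection point $D_i\cap D_j$ and every higher incidence point of the configuration. The strategy is to show that these fixed incidences are rich enough to pin any candidate projective automorphism down to the identity.

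I would first treat the irreducible types $A_1^k$ with $k\in\{0,1,2,3,4\}$, for which $S$ is a normal cubic surface anticanonically embedded in $\mathbb{P}^3$. Via the action on $H^0(S,\mathcal{O}_S(-K_S))$, any $f\in\mathrm{Aut}(S,D)$ lifts to an element $\tilde f\in\mathrm{PGL}_4$ preserving $S$ and each marked line individually. The core task is then to exhibit, for each $k$, five intersection points $D_i\cap D_j$ that are in linearly general position in $\mathbb{P}^3$ (no four of them coplanar); since $\tilde f$ fixes each such point, it must be the identity in $\mathrm{PGL}_4$, and hence $f=\mathrm{id}_S$. For the smooth cubic case ($k=0$) this is immediate from the $135$ pairwise incidences of the $27$ lines. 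For $k=1,2,3,4$, the arrangement in the first row of \cref{Fig:line_arrangement}, together with the unpictured transverse marked lines away from the $A_1$-points, provides the required five points after a short combinatorial inspection.

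For the reducible types $N$ and $(A_1^k,N)$, the surface $S=V(x_0x_1x_2)\subseteq\mathbb{P}^3$ is the union of the three coordinate planes $S_0,S_1,S_2$. Because each $D_i$ lies in exactly one component $S_j$ and the $27$ marked lines are preserved individually (with labels), $f$ cannot permute the components, so it restricts to elements $f_j\in\mathrm{PGL}_3$ on each plane $S_j\cong\mathbb{P}^2$, each fixing the marked lines lying on $S_j$ set-wise. Since an automorphism of $\mathbb{P}^2$ fixing four lines in general position (no three concurrent) is necessarily the identity, it then suffices to read off four such marked lines on each plane from the second row of \cref{Fig:line_arrangement}; this is straightforward for the pure $N$ type and remains possible in the mixed $(A_1^k,N)$ types despite the line coincidences, since the lines of multiplicity $2$ and $4$ are still preserved set-wise by $f_j$.

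The main obstacle, and the only subtle point, is the combinatorial bookkeeping in the most degenerate cases such as $A_1^4$ or $(A_1^3,N)$, in which many of the marked lines have coalesced with multiplicities $2$ or $4$ and only a few transverse marked lines remain: one must then combine the intersections on the coincident lines with the few transverse ones to produce the required configuration of five general-position points in $\mathbb{P}^3$ (respectively, four general-position lines in $\mathbb{P}^2$). No single uniform argument handles all nine degeneration types, but each individual verification is elementary once the arrangement is written down.
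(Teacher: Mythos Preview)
Your proposal is correct and follows essentially the same route as the paper: extend $f$ to $\mathrm{PGL}_4$ (or to $\mathrm{PGL}_3$ on each component in the reducible cases) and exhibit enough fixed incidences of marked lines to force the identity. The only notable divergences in execution are that for type $A_1^4$ the paper sidesteps the hunt for five general intersection points by invoking the known identification $\mathrm{Aut}(S)\cong\mathfrak{S}_4$ permuting the four nodes (so any nontrivial element moves a quadruple line), and for the $N$-types the paper uses the dual device of locating four general \emph{points} on each plane---two on each of the two conductor lines, away from the triple point---rather than four general lines.
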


\begin{proof}
Let $f\in\mathrm{Aut}(S,D)$. Let us first discuss the case where $S$ is smooth or $(S,\mathbf{b}D)$ is of type $A_1^k$, $k=1,\ldots,4$. In these cases, we have that $f$ is the identity on $S$ if and only if it fixes point-wise five points in $S$ which are linearly general in $\mathbb{P}^3$. This is true because $f$ extends to a linear map $\widetilde{f}\in\mathrm{PGL}_4(\mathbb{C})$ by \cite[Theorem~30]{kollar2019algebraic}, and $\widetilde{f}$ is the identity if and only if it fixes point-wise five linearly general points. As $f$ sends a marked line to itself, the points arising as the intersection of two marked lines will be fixed by $f$. So, it suffices to find five linearly general points on $S$ arising as the intersection of two lines. We do this case by case.

\begin{enumerate}[leftmargin=*]

\item \textbf{Smooth type.} The surface $S$ can be viewed as the blow up of $\mathbb{P}^2$ at six points $p_1,\ldots,p_6$ such that no three points are collinear and all six do not lie on a conic. Let $E_i\subseteq S$ be the exceptional divisor over $p_i$. For $i\neq j$, let $\widehat{\ell}_{ij}$ be the strict transform of the line $\ell_{ij}\subseteq\mathbb{P}^2$ passing through $p_i$ and $p_j$. Finally, let $\widehat{C}_i$ be the strict transform of the unique conic $C_i\subseteq\mathbb{P}^2$ passing through $p_1,\ldots,\widehat{p}_i,\ldots,p_6$. Denote by $P_{ij}$ the intersection point $E_i\cap\widehat{\ell}_{ij}$ and by $Q_{ij}$ the intersection point $E_i\cap\widehat{C}_j$. Then, the four points $P_{12},Q_{13}\in E_1$ and $P_{21},Q_{23}\in E_2$ are not coplanar: if by contradiction they were contained in a plane $H\subseteq\mathbb{P}^3$, then the plane $H$ would contain the four lines $E_1,E_2,\widehat{\ell}_{12},\widehat{C}_3$, which is impossible because $S\cap H$ contains at most three lines.

So, the triples of distinct points among $P_{12},Q_{13},P_{21},Q_{23}$ generate four distinct planes, which we name $F_1,\ldots,F_4$. These planes intersect the line $E_3$ in four points. Hence, there exists $P\in\{P_{31},P_{32},P_{34},P_{35},P_{36}\}$ such that $P$ is different from $E_3\cap F_i$ for all $i=1,\ldots,4$. It follows from this construction that $P_{12},Q_{13},P_{21},Q_{23},P$ are five linearly general points arising as the intersection points of pairs of lines.

\item \textbf{Type $A_1$.} The cubic surface $S$ can be constructed as the blow up of $\mathbb{P}^2$ at six distinct points $p_1,\ldots,p_6$ lying on a conic curve $C$, then contract the strict transform of $C$ to an $A_1$-singularity, which we denote by $P$. By \cite[Page~383]{Nguyen2005semi}, the six double lines through $P$ are the exceptional curves $E_1,\ldots,E_6$ (we use the same notation for their images under the contraction). We can assume that the plane spanned by $E_1,E_2$ is different from the one spanned by $E_3,E_4$: otherwise, if $E_1,\ldots,E_4$ were contained in a plane $H$, then the intersection $S\cap H$ would contain four lines, which cannot be.

On $S$, the strict transform of the line connecting $p_i$ and $p_j$, denoted by $\widehat{\ell}_{ij}$, intersects $E_i$ at a point $P_{ij}$. Consider the four points $P_{12},P_{21},P_{34},P_{43}$ (this is pictured in \cref{Fig:A1_local}). We have two possibilities:

\begin{itemize}

\item $P_{12},P_{21},P_{34},P_{43}$ are not coplanar. Then, the quintuple $P_{12},P_{21},P_{34},P_{43},P$ is in general linear position. To show this, it is enough to check that there is no plane containing $P$ and any three among $P_{12},P_{21},P_{34},P_{43}$. If there was a plane $H$ containing $P$ and three among $P_{12},P_{21},P_{34},P_{43}$, then $H\cap S$ would contain three distinct exceptional divisors $E_i,E_j,E_k$, $i,j,k\in\{1,\ldots,4\}$. But then, $H$ would also contain the lines $\widehat{\ell}_{ij},\widehat{\ell}_{ik},\widehat{\ell}_{jk}$, which cannot be.

\item $P_{12},P_{21},P_{34},P_{43}$ are contained in a plane $T$. Then, consider the line $\widehat{\ell}_{23}\subseteq S$ and the point $P_{23}=E_2\cap\widehat{\ell}_{23}$. Then, the four points $P_{12},P_{23},P_{34},P_{43}$ are not coplanar: if they were contained in a plane, then this plane would equal $T$, as $T$ is spanned by $P_{12},P_{34},P_{43}$. Then, the plane $T$ would contain the line $E_2$ because $P_{21},P_{23}\in T\cap E_2$. Hence, also $P\in T$, so that $T\cap S$ would contain (at least) the lines $E_1,E_2,E_3,E_4$, which is impossible.

We now claim that the five points $P_{12},P_{23},P_{34},P_{43},P$ are in general linear position, and this can be argued in a way analogous to what we did at the end of the previous bullet point.

\end{itemize}

\begin{center}
\begin{figure}[h]
\begin{tikzpicture}[scale=0.7]
    \draw (0,-0.2) -- (0,3) -- (4,4) -- (4,0.8) -- cycle;
    \draw (0,3) -- (-4,4) -- (-4,0.8) -- (0,-0.2);
    \fill (0,1) circle (5pt);
    \coordinate (P) at (0,1);
    \coordinate (E1) at (-3,2);
    \coordinate (E2) at (-3,3);
    \coordinate (E3) at (3,3);
    \coordinate (E4) at (3,2);
    \coordinate (A) at (-2.5,1);
    \coordinate (B) at (-2.5, 3.5);
    \coordinate (C) at (2.5,1);
    \coordinate (D) at (2.5, 3.5);
    \coordinate (E) at (-2,2);
    \coordinate (F) at (2,2);
    \draw [dashed, line width=1pt] (P) -- (E1);
    \draw [dashed, line width=1pt] (P) -- (E2);
    \draw [dashed, line width=1pt] (P) -- (E3);
    \draw [dashed, line width=1pt] (P) -- (E4);
    \node [left] at (-3,2) {$E_1$};
    \node [left] at (-3,3) {$E_2$};
    \node [right] at (3,3) {$E_3$};
    \node [right] at (3,2) {$E_4$};
    \draw [dashed, line width=1pt] (A) -- (B);
    \draw [dashed, line width=1pt] (C) -- (D);
    \node [left] at (-2.4,1.1) {$\widehat{\ell}_{12}$};
    \node [right] at (2.4,1.1) {$\widehat{\ell}_{34}$};
    \node [right] at (0,0.5) {$P$};
    \draw [dashed, line width=1pt] (E) -- (F);
    \node [above] at (-1,2) {$\widehat{\ell}_{23}$};
    \path[name path=line1] (P) -- (E1);
    \path[name path=line2] (P) -- (E2);
    \path[name path=line3] (P) -- (E3);
    \path[name path=line4] (P) -- (E4);
    \path[name path=lineleft] (A) -- (B);
    \path[name path=lineright] (C) -- (D);
    \path[name path=linehorizontal] (E) -- (F);
    \path[name intersections={of=line1 and lineleft, by=pp1}];
    \path[name intersections={of=line2 and lineleft, by=pp2}];
    \path[name intersections={of=line3 and lineright, by=pp3}];
    \path[name intersections={of=line4 and lineright, by=pp4}];
    \path[name intersections={of=line2 and linehorizontal, by=pp5}];
    \path[name intersections={of=line3 and linehorizontal, by=pp6}];
    \fill (pp1) circle (3pt);
    \fill (pp2) circle (3pt);
    \fill (pp3) circle (3pt);
    \fill (pp4) circle (3pt);
    \fill (pp5) circle (3pt);
    \fill (pp6) circle (3pt);
\end{tikzpicture}
\caption{Some of the lines on a cubic surface with exactly one $A_1$ singularity in case (2) of the proof of \cref{prop:auto}.}
\label{Fig:A1_local}
\end{figure}
\end{center}

\item \textbf{Type $A_1^2$.} Denote the two $A_1$-singularities by $P_1,P_2$ and the four double lines through $P_i$ by $L_i(j)$, $j=1,\ldots,4$, $i=1,2$. According to Cayley's table in \cite[Page~269]{cayley1869vii} (this is our case because the multiplicities of the lines correspond), every double line passing through $P_1$ intersects with exactly one double line through $P_2$ (see the left-hand side of \cref{Fig:A1^2A1^3line_arrangement}). So, up to relabeling, we may assume that $L_1(j)$ intersects with $L_2(j)$ at a point $Q_j$ for $j=1,\ldots,4$. Notice that the points $Q_1,\ldots,Q_4$ cannot lie on the same line. Otherwise, there would be a plane $H$ containing the lines $L_1(1),\ldots,L_1(4)$, hence the intersection $H\cap S$ would consist of at least four lines, which is impossible. So, we can select three points among $Q_1,\ldots,Q_4$ which do not lie on a line. Without loss of generality, suppose that these three points are $Q_1,Q_2,Q_3$. Then, $Q_1,Q_2,Q_3,P_1,P_2$ are in general linear position. To show this, as $Q_1,Q_2,Q_3$ are not aligned, it is enough to show, up to relabeling, that $Q_1,Q_2,P_1,P_2$ are not coplanar. By contradiction, if $Q_1,Q_2,P_1,P_2$ were contained in a plane $H$, then $H\cap S$ would contain the lines $L_1(1),L_1(2),L_2(1),L_2(2)$, which is impossible.

\item \textbf{Type $A_1^3$.} The configuration of lines on $S$ is described in \cite[Page~290]{cayley1869vii} (this is our case because the multiplicities of the lines correspond). In this table, the double lines are labeled by $1,\ldots,6$, so here we denote them by $L_1,\ldots,L_6$. From the same table, we see that the three $A_1$ singularities are $P_1=L_1\cap L_2$, $P_2=L_3\cap L_4$, and $P_3=L_5\cap L_6$. Moreover, the lines $L_1$ and $L_3$ are coplanar, so that we have the intersection point $Q_1=L_1\cap L_3$, distinct from $P_1,P_2,P_3$. Analogously, we have the intersection point $Q_2=L_2\cap L_4$. For a visual reference, see the right-hand side of \cref{Fig:A1^2A1^3line_arrangement}. Then, the five points $P_1,P_2,P_3,Q_1,Q_2$ are in general linear position. To prove this, let $\mathcal{X}\subseteq\{P_1,P_2,P_3,Q_1,Q_2\}$ be any subset of four points and assume by contradiction that $\mathcal{X}$ is contained in a plane $H$. We now argue by cases.

\begin{itemize}

\item $\mathcal{X}=\{P_1,P_2,P_3,Q_i\}$. Then, $H\cap S$ would have to contain the three quadruple lines and $L_1$ if $i=1$, or $L_4$ if $i=2$. So, this is excluded.

\item $\mathcal{X}=\{P_1,P_2,Q_1,Q_2\}$. This is also impossible because $H\cap S$ would have to contain the lines $L_1,\ldots,L_4$.

\item $\mathcal{X}=\{P_1,P_3,Q_1,Q_2\}$. Then, $H\cap S$ would have to contain $L_1,L_2$, the quadruple line through $P_1$ and $P_3$, and the line $L_6$ because the lines $L_1$ and $L_6$ have non-empty intersection (this comes from \cite[Page~290]{cayley1869vii}). This is impossible.

\item $\mathcal{X}=\{P_2,P_3,Q_1,Q_2\}$. Then, $H\cap S$ would have to contain $L_3,L_4$, the quadruple line through $P_2$ and $P_3$, and the line $L_5$ because the lines $L_3$ and $L_5$ have non-empty intersection (this can be read from \cite[Page~290]{cayley1869vii}). This is also excluded.

\end{itemize}
As no quadruple in $P_1,P_2,P_3,Q_1,Q_2$ lies on a plane, the quintuple is linearly general.

\begin{figure}[h]
    \centering
\begin{tabular}{cc}
\begin{minipage}{0.4\textwidth}
\begin{tikzpicture}[scale=0.85]
    \draw[dotted, line width=0.8pt] (-3,0) -- (3,0);
    \draw[dashed] (-2,0) -- (0,0.6) -- (2,0);
    \draw[dashed] (-2,0) -- (0,1.2) -- (2,0);
    \draw[dashed] (-2,0) -- (0,2) -- (2,0);
    \draw[dashed] (-2,0) -- (0,3) -- (2,0);
    \fill (0,0.6) circle (1.5pt);
    \fill (0,1.2) circle (1.5pt);
    \fill (0,2) circle (1.5pt);
    \fill (0,3) circle (1.5pt);
    \fill (2,0) circle (3.7pt);
    \fill (-2,0) circle (3.7pt);
\end{tikzpicture}
\end{minipage}
&
\begin{minipage}{0.2\textwidth}
\begin{tikzpicture}[scale=1.0]

    \coordinate (A) at (0,0);
    \coordinate (B) at (3.4,0);
    \coordinate (C) at (1.7,{1.7*sqrt(3)});

    \draw[dotted, thick] (A) -- (B) -- (C) -- cycle;

    \coordinate (MA) at ($(B)!0.5!(C)$);
    \coordinate (PA) at ($(MA) + (-1.6,-1)$);
    \coordinate (MB) at ($(A)!0.5!(C)$);
    \coordinate (PB) at ($(MB) + (1.6,-1)$);
    \coordinate (MC) at ($(A)!0.5!(B)$);
    \coordinate (PC) at ($(MC) + (0,1.6)$);

    \path [name path=LineA_PC] (A) -- (PC);
    \path [name path=LineB_PC] (B) -- (PC);
    \path [name path=LineA_PB] (A) -- (PB);
    \path [name path=LineB_PA] (B) -- (PA);
    \path [name path=LineC_PA] (C) -- (PA);
    \path [name path=LineC_PB] (C) -- (PB);
    \path [name intersections={of=LineA_PC and LineB_PC, by=ABh}];
    \path [name intersections={of=LineA_PB and LineB_PA, by=ABl}];
    \path [name intersections={of=LineB_PA and LineC_PA, by=BCh}];
    \path [name intersections={of=LineB_PC and LineC_PB, by=BCl}];
    \path [name intersections={of=LineC_PB and LineA_PB, by=ACh}];
    \path [name intersections={of=LineC_PA and LineA_PC, by=ACl}];

    \draw [dashed] (B) -- (PA) -- (C);
    \draw [dashed] (A) -- (PB) -- (C);
    \draw [dashed] (A) -- (PC) -- (B);

    \fill (A) circle (3pt);
    \fill (B) circle (3pt);
    \fill (C) circle (3pt);
    \fill (ABh) circle (1.5pt);
    \fill (ABl) circle (1.5pt);
    \fill (BCh) circle (1.5pt);
    \fill (BCl) circle (1.5pt);
    \fill (ACh) circle (1.5pt);
    \fill (ACl) circle (1.5pt);

    \node at (2.2,3) {$P_1$};
    \node at (3.5,-0.5) {$P_2$};
    \node at (0,-0.5) {$P_3$};

    \node at (2.2,1.8) {\footnotesize$1$};
    \node at (1.2,1.8) {\footnotesize$2$};
    \node at (2.7,0.9) {\footnotesize$3$};
    \node at (2.05,0.14) {\footnotesize$4$};
    \node at (0.71,0.9) {\footnotesize$5$};
    \node at (1.35,0.14) {\footnotesize$6$};
\end{tikzpicture}
\end{minipage}\\
\end{tabular}
\vspace{0.6cm}
    \caption{Partial configurations of lines on $S$ in the degenerate types $A_1^2$ (on the left) and $A_1^3$ (on the right).}
    \label{Fig:A1^2A1^3line_arrangement}
\end{figure}

\item \textbf{Type $A_1^4$.} In this case, instead of finding five linearly general points on $S$ arising from the intersection of two lines, we prove directly that $f=\mathrm{id}_S$. According to \cite[Table~3]{sakamaki2010automorphism} (see also \cite[Example~9.2.7]{dolgachev2012classical}), $\mathrm{Aut}(S,D)$ is isomorphic to the permutation group $\mathfrak{S}_4$, which permutes the four $A_1$-singularities. Then, for any $f\in\mathrm{Aut}(S,D)\setminus\{\mathrm{id}_S\}$, there exists at least one of the six quadruple lines $D_i$ such that $f(D_i)\neq D_i$. Since all the lines are fixed by $f$, then $f=\mathrm{id}_S$.

\end{enumerate}

Lastly, we prove that $f=\mathrm{id}_S$ if $(S,\mathbf{b}D)$ is a degeneration of type $N$ or $(A_1^k,N)$, $k=1,2,3$. In these cases, recall that $S=S_0\cup S_1\cup S_2$ with $S_i=V(x_i)$, $i=0,1,2$. First, we observe that each one of the three planes contains a marked line which is not contained in the other two planes. As marked lines are fixed by $f$, we have that $f(S_i)=S_i$ for all $i$. So, it suffices to show that $f|_{S_i}=\mathrm{id}_{S_i}$. Let $\{i,j,k\}=\{1,2,3\}$ and consider the gluing loci $S_i\cap S_j$ and $S_i\cap S_k$. Along each one of these two lines, it is possible to find two distinct points, away from $S_0\cap S_1\cap S_2$, which arise as the intersection of two (or more) marked lines (see \cref{Fig:line_arrangement}). As these four linearly general points are fixed by $f$, we must have that $f|_{S_i}=\mathrm{id}_{S_i}$.
\end{proof}

\begin{theorem}
\label{thm:fiberwise_argument}
The stack $\stackMsp$ is an algebraic space. In particular, the KSBA compactification $\Msp$ is a fine moduli space.
\end{theorem}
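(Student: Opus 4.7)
The plan is to deduce the theorem directly from \cref{prop:auto} using the standard criterion that a Deligne--Mumford stack with trivial automorphism groups at every geometric point is an algebraic space. Concretely, $\stackMsp$ is by construction a closed substack of Viehweg's moduli stack $\overline{\mathcal{V}}$, which is a separated Deligne--Mumford stack; hence $\stackMsp$ inherits the same properties. Since every stable pair $(S,\mathbf{b}D)$ parametrized by $\Msp$ is a geometric point of $\stackMsp$, and the stabilizer of such a geometric point in the stack is exactly the automorphism group $\mathrm{Aut}(S,D)$, \cref{prop:auto} tells us that all stabilizers are trivial.

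From here I would invoke the standard fact (for instance \cite[Corollary~8.1.1]{laumon2000champs}, or the equivalent statement in the Stacks Project) that a Deligne--Mumford stack whose inertia stack is trivial is an algebraic space. First I would check that the inertia $I_{\stackMsp}\rightarrow\stackMsp$ is trivial: the fiber of the inertia over a geometric point equals its automorphism group, so the previous paragraph shows that $I_{\stackMsp}\rightarrow\stackMsp$ is a bijection on geometric points. Since $\overline{\mathcal{V}}$ is separated, the inertia is finite and unramified over $\stackMsp$; combined with the bijectivity on geometric points this forces $I_{\stackMsp}\rightarrow\stackMsp$ to be an isomorphism, so $\stackMsp$ is an algebraic space.

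Having established that $\stackMsp$ is an algebraic space, the coarse moduli map $\stackMsp\rightarrow\Msp$ is an isomorphism of algebraic spaces, so $\Msp$ represents the moduli functor associated to $\stackMsp$ and is therefore a fine moduli space.

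The only potentially delicate step is passing from ``trivial stabilizers at every geometric point'' to ``the inertia stack is trivial as a stack over $\stackMsp$''. This is where separatedness of $\overline{\mathcal{V}}$ (and hence of $\stackMsp$) is used, since it guarantees that $I_{\stackMsp}\rightarrow\stackMsp$ is proper and unramified, hence finite étale, and a finite étale cover that is bijective on geometric points must be an isomorphism. The rest of the argument is purely formal, so the fiber-wise triviality of $\mathrm{Aut}(S,D)$ proved in \cref{prop:auto} is the real content of the theorem.
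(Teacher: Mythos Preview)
Your proof is correct and follows the same route as the paper: invoke \cref{prop:auto} and then the standard criterion that a separated Deligne--Mumford stack with trivial geometric stabilizers is an algebraic space, whence the coarse moduli map is an isomorphism. The paper simply cites \cite[Corollary~8.3.5 and Remark~8.3.6]{olsson2016algebraic} for this criterion, whereas you unpack the inertia argument explicitly. One small slip: ``proper and unramified'' gives only \emph{finite}, not automatically \emph{finite \'etale}; what makes the inertia trivial here is that its geometric fibers are finite group schemes over a field of characteristic zero, hence reduced, so a one-point fiber is already the trivial group. With that correction the argument goes through exactly as you wrote it.
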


\begin{proof}
By \cref{prop:auto}, all the geometric objects parametrized by $\stackMsp$ do not have non-trivial automorphisms. Therefore, by combining \cite[Corollary~8.3.5 and Remark~8.3.6]{olsson2016algebraic}, we have that the stack $\stackMsp$ is an algebraic space. Since $\Msp$ is the coarse moduli space of $\stackMsp$ (see \cite[Definition~11.1.1]{olsson2016algebraic}), we have that $\stackMsp=\Msp$, which implies that $\Msp$ is a fine moduli space.
\end{proof}


\section{Smoothness of the moduli space}
\label{sec:smooth-stack}

\subsection{Deformation theory preliminaries}\label{subsection:preliminary_deformation}

In this section, we prove that the stack $\stackMsp$ is smooth. With the tools developed in \cite{illusie1971complexe,illusie1972complexe,Illusie1972cotangent} by Illusie, Hacking proved that the KSBA compactification of the moduli stack of stable pairs $(\mathbb{P}^2,(\frac{3}{d}+\epsilon) C_d)$, $3\nmid d$, is smooth, where $C_d$ is a smooth curve of degree $d$ in $\mathbb{P}^2$ \cite[Theorem~7.1]{hacking2004compact}. A similar result for the pairs consisting of $\mathbb{P}^1\times\mathbb{P}^1$ with a weighted curve is given in \cite[Theorem~6.6]{deopurkar2021stable}. Our proof follows the strategy in \cite[Section~3]{hacking2004compact}.

\begin{definition}\label{def:ordinarydefobs}
    Let $(S,{\bf b}D)$ be a stable marked cubic surface parametrized by $\Msp$. Let $\mathbb{L}_{S/\mathbb{C}}$ be the cotangent complex of $S\rightarrow\mathrm{Spec}(\mathbb{C})$. The \emph{tangent space/sheaf}, the \emph{deformation space/sheaf}, and the \emph{obstruction space/sheaf} of a projective surface $S$ are defined as
    \begin{align*}
        T_{S}^i&=\mathrm{Ext}^i(\mathbb{L}_{S/\mathbb{C}},\mathcal{O}_{S}),\\
        \T_{S}^i&=\sExt^i(\mathbb{L}_{S/\mathbb{C}},\mathcal{O}_{S}),
    \end{align*}
    for $i=0,1,2$, respectively.
\end{definition}

\begin{remark}
\label{rmk:cotangent_cpx}
In general, the cotangent complex $\mathbb{L}_{X/\mathbb{C}}$ of a variety $X$ is a cochain complex concentrated in non-positive degrees. Referring to \cite[Theorem~10.7]{hacking2001compactification}, we will use the following properties:
\begin{enumerate}

\item $\T_X^0=\sHom(\Omega_{X/\mathbb{C}},\mathcal{O}_X)$, which is the usual tangent sheaf of $X$;

\item The sheaf $\T_X^1$ is supported on the singular locus of $X$;

\item If $X$ is local complete intersection (LCI), then $\T_X^2=0$.

\end{enumerate}
\end{remark}

\begin{remark}\label{Rmk:Q-Gor}
    In the KSBA moduli theory, to preserve the volume of stable pairs in families, one has to work with $\mathbb{Q}$-Gorenstein deformations. Let $p\colon\mathscr{S}\to S$ be the \emph{canonical covering map} from the \emph{canonical covering stack} to $S$ (see \cite[Subection~3.1]{hacking2004compact}). Then, one can define the $\mathbb{Q}$-Gorenstein version of the three spaces/sheaves in \cref{def:ordinarydefobs} by using the cotangent complex $\mathbb{L}_{\mathscr{S}/\mathbb{C}}$ of $\mathscr{S}$, denoted by $T^i_{\mathbb{Q}\mathrm{Gor},S}$ and $\T^i_{\mathbb{Q}\mathrm{Gor},S},i=0,1,2$ (\cite[Notation~3.6]{hacking2004compact}). In our case, for all stable pairs $(S,\mathbf{b}D)$ parametrized by $\Msp$, $S$ is a LCI, and thus Gorenstein. So, their index one covers are the identity. For this reason, our computations will just involve $T_S^i$ and $\T_S^i$.
    For the general treatment of index one covers or $\mathbb{Q}$-Gorenstein deformation theory, we refer to \cite[Section~2.3]{kollar2013singularities}, \cite[Section~3]{hacking2004compact}, and \cite[Section~5]{abramovich2011stable}.
\end{remark}

\subsection{Proof of smoothness}

The next \cref{prop:obstruction_from_div} implies that, to prove smoothness of $\stackMsp$, one only needs to consider the singularity type of $S$, and not the corresponding arrangement of the $27$ marked lines $D$. We start with a technical lemma.

\begin{lemma}\label{lem:3.13_3.14}
    Let $(S,\mathbf{b}D)$ be a stable pair parametrized by $\Msp$. Then, we have that:
    \begin{enumerate}
        \item \label{item:cartier} $D$ is Cartier;
        \item $H^1(S,\mathcal{O}_S(D))=0$.
    \end{enumerate}
\end{lemma}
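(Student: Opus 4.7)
The plan is to deduce both statements from the classical linear equivalence $D\sim -9K_S$. On a smooth cubic surface this is a standard computation in the Picard group of the blowup of $\mathbb{P}^2$ at six general points: expanding the three families of lines (six exceptional curves, fifteen strict transforms of lines through pairs of points, six strict transforms of conics through quintuples) in the standard basis yields that their sum equals $-9K_S$. The first step is to extend this identity to every fiber of the Naruki--Sekiguchi family $(\overline{\mathcal{S}},\overline{\mathcal{D}})\to\Mcr$. Since every singular cubic enumerated in \cref{Subsection:stable_pairs_para} is LCI (either a normal cubic with only $A_1$ singularities, or a simple-normal-crossings union $S_0\cup S_1\cup S_2$ of three copies of $\mathbb{P}^2$ meeting along a triangle), the relative dualizing sheaf $\omega_{\overline{\mathcal{S}}/\Mcr}$ is a line bundle, so the relation $\overline{\mathcal{D}}\sim -9K_{\overline{\mathcal{S}}/\Mcr}$ established on the smooth locus spreads out to every fiber. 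Once this is in hand, part (1) is immediate because $K_S$ is Cartier on each Gorenstein $S$. An alternative case-by-case verification is also available: at each $A_1$-singularity the local class group is $\mathbb{Z}/2\mathbb{Z}$, so one merely counts the multiplicity of $D$ at the singular point from \cref{Fig:line_arrangement} and observes that it equals $12$, hence is even, in every $A_1^k$ case; for the type-$N$ and $(A_1^k,N)$ pairs, Cartier-ness follows by reading multiplicities of $D$ along each double curve $\Delta_{ij}$ from \cref{Fig:line_arrangement} and checking that the two adjacent components contribute compatibly.

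Having $\mathcal{O}_S(D)\cong\omega_S^{-9}$ from part (1), the plan for part (2) splits by degeneration type. When $S$ is smooth or has only $A_1$-singularities, $(S,0)$ is klt and $-K_S$ is ample, so Kawamata--Viehweg vanishing applied to $\omega_S^{-9}\sim K_S+(-10K_S)$ yields $H^1(S,\mathcal{O}_S(D))=0$ at once. For the type-$N$ and $(A_1^k,N)$ degenerations, the plan is to twist the Mayer--Vietoris resolution
\[
0\to\mathcal{O}_S\to\bigoplus_{i=0}^{2}\mathcal{O}_{S_i}\to\bigoplus_{0\le i<j\le 2}\mathcal{O}_{\Delta_{ij}}\to\mathcal{O}_P\to 0
\]
by the line bundle $\mathcal{O}_S(D)$, where $P$ is the triple point. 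By adjunction, $\omega_S|_{S_i}=\omega_{\mathbb{P}^2}(\Delta_{ij}+\Delta_{ik})=\mathcal{O}_{\mathbb{P}^2}(-1)$, so the restrictions are $\mathcal{O}_S(D)|_{S_i}\cong\mathcal{O}_{\mathbb{P}^2}(9)$ and $\mathcal{O}_S(D)|_{\Delta_{ij}}\cong\mathcal{O}_{\mathbb{P}^1}(9)$. All of these line bundles have vanishing $H^1$, and the skyscraper $\mathcal{O}_P$ trivially does too, so breaking the four-term sequence into two short exact sequences and chasing cohomology gives $H^1(S,\mathcal{O}_S(D))=0$.

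The main obstacle I anticipate is making the extension of $D\sim -9K_S$ from the generic smooth cubic to every special fiber rigorous; once that identity is secured, both assertions reduce to routine vanishing computations. If the global family-theoretic argument turns out to be awkward, the case-by-case multiplicity count at each singular point, together with the adjunction computation on each component in the non-normal case, should suffice.
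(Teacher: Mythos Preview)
Your proposal is correct (the fallback for part~(1) and both branches of part~(2) go through), but it follows a genuinely different route from the paper in each part. For part~(1), the paper never invokes $D\sim -9K_S$; instead, in the normal cases it observes that $D$ is literally cut out on $S$ by nine tritangent planes, so $D=(H_1+\cdots+H_9)|_S$ is manifestly Cartier, and in the non-normal cases it checks your multiplicity-matching condition along the double curves directly. Your family-theoretic spreading argument, as you suspected, is not self-contained: linear equivalence of Weil divisors on the smooth locus does not propagate to the boundary without knowing something about the total space (e.g.\ local factoriality of $\overline{\mathcal{S}}$), so the fallback really is needed. For part~(2), the paper gives a single uniform argument: Serre duality turns the question into $H^1(\mathcal{O}_S(-H-D))=0$, and then the Du~Bois vanishing of Kov\'acs--Schwede--Smith applies once one checks $H+D$ is ample (done componentwise on the planes). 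Your split into Kawamata--Viehweg for the klt cases and Mayer--Vietoris for the simple-normal-crossing case is more elementary in that it avoids Du~Bois theory, but note that the Mayer--Vietoris chase is not quite automatic: to get $H^1=0$ you must check that $H^0(\bigoplus_i\mathcal{O}_{\mathbb{P}^2}(9))$ surjects onto the kernel $H^0(\mathcal{K})$, which comes down to the surjectivity of $H^0(\mathcal{O}_{\mathbb{P}^2}(9))\to H^0(\mathcal{O}_{\ell_1\cup\ell_2}(9))$ for two lines meeting at a point (equivalently $H^1(\mathcal{O}_{\mathbb{P}^2}(7))=0$). An even shorter alternative you might note: once $\mathcal{O}_S(D)\cong\mathcal{O}_S(9)$, the ideal-sheaf sequence $0\to\mathcal{O}_{\mathbb{P}^3}(6)\to\mathcal{O}_{\mathbb{P}^3}(9)\to\mathcal{O}_S(9)\to 0$ kills $H^1$ uniformly for every cubic $S\subset\mathbb{P}^3$.
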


\begin{proof}
(1) First, we assume that $S$ is smooth or that $(S,\mathbf{b}D)$ is of type $A_1^k$, $k=1,\ldots,4$. We know from \cite[Example~5.2]{gallardo2021geometric} and Part~1 of the proof of \cite[Proposition~5.12]{gallardo2021geometric} that we can find nine planes $H_1,\ldots,H_9\subseteq\mathbb{P}^3$ such that $D=(H_1+\ldots+H_9)|_S$ (these planes are \emph{tritangents}, that is, planes intersecting $S$ in three lines). So, $D$ is Cartier because it is the pullback of a Cartier divisor.

Now, let us assume that $(S,\mathbf{b}D)$ is of type $N$ or $(A_1^k,N)$, $k=1,2,3$. Hence, $S$ is the union of the three coordinate hyperplanes $S_i=V(x_i)$, $i=0,1,2$. Let $U\subseteq S$ be the open subset given by the complement of the double locus. Since $U$ is smooth, the divisor $D$ is Cartier on $U$. So, let $p$ be a point lying on $D$ and $S_0\cap S_1$ (an analogous argument applies for $S_0\cap S_2$ and $S_1\cap S_2$). Let $L_i(1),\ldots,L_i(n_i)\subseteq S_i$ be the lines passing through $p$, and denote by $m_i(1),\ldots,m_i(n_i)$ their multiplicities, $i=0,1$. By checking all the possibilities for $p$ in \cref{Fig:line_arrangement}, one can observe that
\begin{equation}
\label{eq:multiplicities-identity}
\sum_{j=1}^{n_0}m_0(j)=\sum_{j=1}^{n_1}m_1(j).
\end{equation}
For instance, in type $N$, the only possibility is $(m_0(1),m_0(2),m_0(3))=(1,1,1)=(m_1(1),m_1(2),m_1(3))$. In type $(A_1,N)$, we also see the case $(m_0(1),m_0(2),m_0(3))=(1,1,1)$ and $(m_1(1),m_1(2))=(1,2)$, and so on. The equality in \cref{eq:multiplicities-identity} guarantees that, locally at $p$, the divisor $D$ can be cut out by restricting to $S$ planes spanned by pairs of lines, where one line is in $S_0$ and the other one in $S_1$.

(2) Let $H\subseteq\mathbb{P}^3$ be a generic hyperplane. Then, $K_S=-H|_S$, and by Serre's duality we have that
\[
H^1(S,\mathcal{O}_S(D))\cong H^1(S,\mathcal{O}_S(-H|_S-D))^\vee.
\]
So, alternatively, we want to prove the vanishing of $H^1(S,\mathcal{O}_S(-H|_S-D))$. This is a consequence of \cite[Corollary~6.6]{kovacs2010canonical} if we can show that $H|_S+D$ is ample. As $H|_S$ is ample, it suffices to show that $D$ is ample. For $i=0,1,2$, $D|_{S_i}$ is ample because $S_i\cong\mathbb{P}^2$ and $D|_{S_i}$ corresponds to $\mathcal{O}_{\mathbb{P}^2}(9)$ under this isomorphism. In conclusion, $D$ is ample by \cite[Proposition~2.3]{ottem2012ample}.
\end{proof}

\begin{proposition}\label{prop:obstruction_from_div}
    For a pair $(S,\mathbf{b}D)$ parametrized by $\Msp$, let $\mathcal{S}$ be an infinitesimal deformation of $S$ along $\mathbb{C}[\epsilon]/\langle\epsilon^2\rangle\rightarrow\mathbb{C}$. Then, there exists a deformation pair $(\mathcal{S},\mathcal{D})$ of $(S,D)$.
\end{proposition}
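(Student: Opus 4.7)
The plan is to reduce the divisor-lifting problem to two standard obstruction calculations, using that $D$ is Cartier on $S$ by \cref{lem:3.13_3.14}\ref{item:cartier}. Writing $D=V(s_0)$ for some $s_0\in H^0(S,\mathcal{O}_S(D))$, a deformation pair $(\mathcal{S},\mathcal{D})$ extending the given $\mathcal{S}$ is equivalent to the datum of a line bundle $\mathcal{L}$ on $\mathcal{S}$ with $\mathcal{L}|_S\cong\mathcal{O}_S(D)$ together with a global section $s\in H^0(\mathcal{S},\mathcal{L})$ restricting to $s_0$: the zero scheme $\mathcal{D}:=V(s)$ is then the desired effective Cartier divisor.

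The first task is to lift the line bundle. From the exponential-type sequence $0\to\mathcal{O}_S\to\mathcal{O}_{\mathcal{S}}^\ast\to\mathcal{O}_S^\ast\to 0$ attached to the square-zero thickening $S\hookrightarrow\mathcal{S}$, the obstruction lies in $H^2(S,\mathcal{O}_S)$. Every $S$ parametrized by $\Msp$ is Gorenstein, since it is either smooth, or a normal cubic with rational double points, or the union $S_0\cup S_1\cup S_2$ of the three coordinate planes, each arising as a cubic hypersurface of $\mathbb{P}^3$. Adjunction therefore yields the uniform identification $\omega_S\cong\mathcal{O}_{\mathbb{P}^3}(-4+3)|_S=\mathcal{O}_S(-1)$ across all degeneration types of \cref{Fig:line_arrangement}, so by Serre duality it suffices to show $H^0(S,\mathcal{O}_S(-1))=0$. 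This in turn follows at once from the Koszul-type resolution
\[
0\to\mathcal{O}_{\mathbb{P}^3}(-4)\to\mathcal{O}_{\mathbb{P}^3}(-1)\to\mathcal{O}_S(-1)\to 0,
\]
combined with the standard vanishings $H^0(\mathcal{O}_{\mathbb{P}^3}(-1))=0$ and $H^1(\mathcal{O}_{\mathbb{P}^3}(-4))=0$. Hence $H^2(S,\mathcal{O}_S)=0$ and a lift $\mathcal{L}$ to $\mathcal{S}$ exists.

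Having fixed $\mathcal{L}$, the second task is to lift $s_0$. Tensoring $0\to\mathcal{O}_S\xrightarrow{\cdot\epsilon}\mathcal{O}_{\mathcal{S}}\to\mathcal{O}_S\to 0$ with $\mathcal{L}$ produces
\[
0\to\mathcal{O}_S(D)\xrightarrow{\cdot\epsilon}\mathcal{L}\to\mathcal{O}_S(D)\to 0,
\]
whose long exact sequence places the obstruction to lifting $s_0$ in $H^1(S,\mathcal{O}_S(D))$, which vanishes by \cref{lem:3.13_3.14}(2). The resulting subscheme $\mathcal{D}=V(s)$ is then automatically flat over $\mathbb{C}[\epsilon]/\langle\epsilon^2\rangle$: the Koszul complex $0\to\mathcal{L}^{-1}\xrightarrow{s}\mathcal{O}_{\mathcal{S}}\to\mathcal{O}_{\mathcal{D}}\to 0$ stays exact after restriction to the central fiber because $s_0$ is a non-zerodivisor (as $V(s_0)=D$ is already Cartier on $S$), and the local criterion for flatness concludes.

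The only case-dependent input in the argument is the Gorenstein property of $S$ together with the adjunction identity $\omega_S\cong\mathcal{O}_S(-1)$, which is where the geometry of the degenerations actually enters. I therefore expect the principal point requiring care to be the verification that adjunction applies uniformly on the non-normal slc degenerations of type $N$ and $(A_1^k,N)$, so that $\omega_S\cong\mathcal{O}_S(-1)$ genuinely holds and the Koszul computation of $H^0(S,\mathcal{O}_S(-1))=0$ goes through unchanged; once this vanishing and \cref{lem:3.13_3.14} are in hand, the remainder of the proof is formal.
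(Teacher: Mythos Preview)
Your argument is correct and is essentially a self-contained version of the paper's one-line proof, which simply invokes \cite[proof of Theorem~3.12]{hacking2004compact} together with \cref{lem:3.13_3.14}. Your hesitation about adjunction on the non-normal degenerations is unwarranted: every $S$ occurring here is a cubic hypersurface in $\mathbb{P}^3$, hence LCI and Gorenstein with $\omega_S\cong\mathcal{O}_S(-1)$ uniformly, so the vanishing $H^2(S,\mathcal{O}_S)=0$ (which you make explicit but the paper leaves buried in the citation to Hacking) goes through without case analysis.
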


\begin{proof}
    By the proof of \cite[Theorem~3.12]{hacking2004compact}, the result is implied by \cref{lem:3.13_3.14}.
\end{proof}

\begin{remark}
By combining \cref{Rmk:Q-Gor} with \cref{prop:obstruction_from_div}, we have that the $\mathbb{Q}$-Gorenstein unobstructedness of the pair $(S,\mathbf{b}D)$ is implied by the unobstructedness of the surface $S$. So, to prove the smoothness of $\stackMsp$ at a point parametrizing the stable pair $(S,\mathbf{b}D)$, it is enough to consider the singularity type of the surface $S$. Therefore, in the proofs that follow, when taking into account the different possibilities for the type of $(S,\mathbf{b}D)$, the discussion for type $(A^k,N)$ will be analogous to the one for type $N$. This is because, in both cases, the surface $S$ is the union of three general planes in $\mathbb{P}^3$. So, we will only consider type $A_1^k$ for $k=1,2,3,4$, type $N$, and $S$ smooth.
\end{remark}

\begin{theorem}\label{thm:smoothstack}
    The algebraic space $\stackMsp$ is smooth. In particular, the projective coarse moduli space $\Msp$ is smooth.
\end{theorem}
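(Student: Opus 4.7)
The plan is to verify smoothness of $\stackMsp$ pointwise by showing that, at every geometric point $[(S,\mathbf{b}D)]\in\Msp$, the $\mathbb{Q}$-Gorenstein obstruction space vanishes; combined with \cref{thm:fiberwise_argument} this yields smoothness of the algebraic space $\Msp$ and hence of the coarse moduli space. Since every surface $S$ appearing in the moduli space is LCI (hence Gorenstein), \cref{Rmk:Q-Gor} identifies the $\mathbb{Q}$-Gorenstein obstruction space with the ordinary obstruction space $T_S^2$ of the underlying surface, and by \cref{prop:obstruction_from_div} the vanishing of $T_S^2$ forces unobstructedness of the pair $(S,\mathbf{b}D)$. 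So the problem reduces to proving $T_S^2=0$ for each underlying surface $S$ listed in \cref{Subsection:stable_pairs_para}. For this I would invoke the local-to-global Ext spectral sequence
$$E_2^{p,q}=H^p\bigl(S,\T^q_S\bigr)\Longrightarrow T^{p+q}_S,$$
and observe that $\T_S^2=0$ in every case by the LCI hypothesis (\cref{rmk:cotangent_cpx}(3)). The only groups contributing to $T^2_S$ are thus $H^1(\T^1_S)$ and $H^2(\T^0_S)$, and the task becomes a case-by-case verification of
$$H^1(\T^1_S)=0\qquad\text{and}\qquad H^2(\T^0_S)=0$$
across $S$ smooth, $S$ of type $A_1^k$ ($k=1,\dots,4$), and $S$ of type $N$ (type $(A_1^k,N)$ being subsumed by type $N$ as noted in the preceding remark).

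The smooth and $A_1^k$ cases can be handled via the minimal resolution $\pi\colon\widetilde{S}\to S$, which is a (weak) del Pezzo surface with $-K_{\widetilde S}$ nef and big. In these cases $\T^1_S$ is either zero (smooth case) or a zero-dimensional skyscraper supported at the $A_1$-points, each stalk being one-dimensional since the versal deformation space of a node has dimension one. Consequently $H^1(\T^1_S)=0$ for trivial support reasons. For $H^2(\T^0_S)$ I would use that, at rational double points, $\T^0_S\cong\pi_*T_{\widetilde S}$ and the higher direct images of $T_{\widetilde S}$ vanish, so Leray reduces the claim to $H^2(T_{\widetilde S})=0$; the latter follows from Serre duality
$$H^2(T_{\widetilde S})\cong H^0\bigl(\Omega_{\widetilde S}\otimes K_{\widetilde S}\bigr)^{\vee}$$
together with $-K_{\widetilde S}$ being nef and big, which forces the right-hand side to vanish.

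For type $N$, $S=S_0\cup S_1\cup S_2$ is a union of three coordinate hyperplanes meeting along three lines $\Delta_0,\Delta_1,\Delta_2$ through a common triple point $p$, and both $\T^0_S$ and $\T^1_S$ must be computed from the local geometry of the non-normal double curve and the triple point. The plan is to set up a Mayer--Vietoris / normalization-style short exact sequence relating $\T^0_S$ to the (logarithmic) tangent sheaves of the three components $S_i\cong\mathbb{P}^2$ and to a sheaf supported on the one-dimensional double locus, and a parallel explicit description of $\T^1_S$ starting from the local equations $xyz=0$ near $p$ and $xy=0$ along each $\Delta_i$ away from $p$. The degree-two cohomology of $\T^0_S$ then reduces to $H^2$ of a logarithmic tangent bundle on $\mathbb{P}^2$ (killed by Serre duality) and $H^2$ of a sheaf on a one-dimensional scheme (killed by dimension), giving $H^2(\T^0_S)=0$, while $H^1(\T^1_S)=0$ follows from a direct cohomology calculation for the explicit sheaf on $\Delta_0+\Delta_1+\Delta_2$. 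The hardest step is precisely this type $N$ case: the non-Cartier triple point and the non-normal double curve require a careful setup of the Mayer--Vietoris sequence and of the local presentation of $\T^1_S$ at $p$, whereas the smooth and $A_1^k$ cases reduce cleanly to standard vanishings on a (weak) del Pezzo surface.
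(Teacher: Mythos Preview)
Your overall strategy is exactly the paper's: reduce to $T^2_S=0$ via the local-to-global spectral sequence, kill $\T^2_S$ by LCI, and then verify $H^1(\T^1_S)=H^2(\T^0_S)=0$ case by case, with the type $(A_1^k,N)$ cases subsumed by type $N$.

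The differences are in the execution of the individual vanishings. For the normal cases ($S$ smooth or $A_1^k$), you pass to the minimal resolution $\widetilde S$, use $\T^0_S\cong\pi_*T_{\widetilde S}$ with $R^{>0}\pi_*T_{\widetilde S}=0$ at rational double points, and then kill $H^2(T_{\widetilde S})$ by Serre duality and the weak del~Pezzo property. The paper instead quotes Manetti's lemma (\cite{Manetti1993}, Lemma~11), which gives $H^2(\T^0_S)=0$ directly from $q=p_g=0$ and $h^0(-K_S)>0$ on the singular surface, avoiding the resolution altogether. Both work; your route is more hands-on, the paper's is a one-line citation.

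For type $N$, your plan for $H^2(\T^0_S)$ via a short exact sequence relating $\T^0_S$ to the tangent sheaves of the components twisted down along the double locus is precisely what the paper does, ending with $H^2(\T^0_{S_i}(-\sum_{j\neq i}\Delta_j))\cong H^0(\Omega^1_{\mathbb{P}^2}(-1))^\vee=0$. For $H^1(\T^1_S)$, however, the paper does not compute $\T^1_S$ from the local equations as you propose; it invokes Friedman's global identification $\T^1_S\cong\mathcal{O}_\Delta(S)\cong\mathcal{O}_{\mathbb{P}^3}(3)|_\Delta$ for a normal-crossing hypersurface, and then gets $H^1=0$ either from Alexeev's vanishing for polarized stable toric varieties or from a direct Mayer--Vietoris on $\Delta=\Delta_0\cup\Delta_1\cup\Delta_2$. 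This is cleaner than patching the local computations at the triple point by hand, and you should be aware of Friedman's result since it packages exactly the ``careful setup'' you flag as the hardest step. (Incidentally, the triple point is not ``non-Cartier'': $S$ is a cubic hypersurface, hence Cartier in $\mathbb{P}^3$ and LCI everywhere; what is delicate is that $\Delta$ fails to be snc there.)
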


\begin{proof}
    To show that $\stackMsp$ is smooth, it suffices to prove that $T^2_{S}=0$, which implies that the obstruction class vanishes. By the Leray spectral sequence $E_2^{p,q}:=H^p(\T^q_{S})\Rightarrow T^{p+q}_{S}$, we have that $T^2_S=0$ if the three cohomology groups
    \[
    H^0(\T^2_{S}),~H^1(\T^1_{S}),~H^2(\T^0_{S})
    \]
    are zero. We prove the vanishing of the above cohomologies in \cref{lem:02_vanishing}, \cref{lem:11_vanishing}, and \cref{lem:20_vanishing}, respectively.
\end{proof}

\begin{lemma}
\label{lem:02_vanishing}
For all pairs $(S,\mathbf{b}D)$ parametrized by $\Msp$, we have that $H^0(\T^2_S)=0$.
\end{lemma}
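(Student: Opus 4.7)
The plan is to reduce the vanishing of $H^0(\mathcal{T}^2_S)$ to the statement that $\mathcal{T}^2_S = 0$ itself, which follows from property (3) in \cref{rmk:cotangent_cpx}: the obstruction sheaf vanishes for local complete intersection (LCI) schemes. So the entire lemma reduces to checking that $S$ is LCI in every degeneration type appearing in $\Msp$.

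First, I would observe that in every case listed in \cref{Subsection:stable_pairs_para}, the surface $S$ is cut out by a single cubic equation in $\mathbb{P}^3$: if $S$ is smooth or of type $A_1^k$ for $k=1,\ldots,4$, then $S$ is a (possibly singular) normal cubic surface in $\mathbb{P}^3$; if $S$ is of type $N$ or $(A_1^k,N)$, then $S = V(x_0 x_1 x_2) \subseteq \mathbb{P}^3$ is again cut out by a single cubic polynomial. Thus in every case, $S \subseteq \mathbb{P}^3$ is a hypersurface, hence LCI.

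Applying \cref{rmk:cotangent_cpx}(3) then gives $\mathcal{T}^2_S = 0$ as a sheaf, which immediately implies $H^0(\mathcal{T}^2_S) = 0$. There is no real obstacle here; the only thing to verify is the global structural observation that every surface parametrized by $\Msp$ embeds in $\mathbb{P}^3$ as a cubic hypersurface, which is visible from the explicit list of degeneration types recalled in the previous subsection.
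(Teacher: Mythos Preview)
Your proof is correct and follows essentially the same argument as the paper: since every surface $S$ parametrized by $\Msp$ is a cubic hypersurface in $\mathbb{P}^3$, it is LCI, so $\mathcal{T}^2_S=0$ by \cref{rmk:cotangent_cpx}\,(3), and hence $H^0(\mathcal{T}^2_S)=0$. The paper states this in a single sentence without enumerating the degeneration types, but the content is identical.
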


\begin{proof}
    Since $S$ is a hypersurface in $\mathbb{P}^3$, and hence LCI, by \cref{rmk:cotangent_cpx}~(3) we have that $\T_S^2=0$, which implies that $H^0(\T^2_{S})=0$.
\end{proof}

\begin{lemma}
\label{lem:11_vanishing}
For all pairs $(S,\mathbf{b}D)$ parametrized by $\Msp$, we have that $H^1(\T^1_S)=0$.
\end{lemma}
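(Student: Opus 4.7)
The plan is to follow the case reduction given in the remark preceding the theorem, treating separately (i) $S$ smooth, (ii) $(S,\mathbf{b}D)$ of type $A_1^k$ for some $k\in\{1,\ldots,4\}$, and (iii) type $N$ (which, as noted, also covers type $(A_1^k,N)$ since the underlying $S$ is the same union of three coordinate hyperplanes). The first two cases are immediate: when $S$ is smooth, $\T^1_S=0$; when $S$ has only $A_1$-singularities, by \cref{rmk:cotangent_cpx}~(2) the sheaf $\T^1_S$ is supported on a finite set, so it is a skyscraper sheaf and $H^1$ vanishes trivially.

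The substantive case is (iii), where $S=V(x_0x_1x_2)\subseteq\mathbb{P}^3$ and the singular locus $\Sigma:=\operatorname{Sing}(S)=L_{01}\cup L_{02}\cup L_{12}$ consists of the three coordinate lines $L_{ij}=V(x_i,x_j)\cong\mathbb{P}^1$ meeting at the single triple point $P=[0:0:0:1]$. My first step is to identify $\T^1_S$ explicitly. Since $S$ is a hypersurface, the dualized conormal sequence gives
\[
\mathcal{T}_{\mathbb{P}^3}|_S\xrightarrow{\partial_f}N_{S/\mathbb{P}^3}=\mathcal{O}_S(3)\longrightarrow\T^1_S\longrightarrow 0,
\]
so $\T^1_S\cong\mathcal{O}_S(3)/J$, with $J$ the image (the Jacobian subsheaf). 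In the chart $U_3\ni P$ with affine coordinates $y_i=x_i/x_3$, the Jacobian ideal equals $(y_1y_2,y_0y_2,y_0y_1)$, and the primary decomposition $(y_1y_2,y_0y_2,y_0y_1)=(y_0,y_1)\cap(y_0,y_2)\cap(y_1,y_2)$ shows that this is precisely the ideal of the \emph{reduced} singular locus; an analogous (simpler) check in the charts $U_0,U_1,U_2$ covers the smooth points of $\Sigma$. Hence $J=I_{\Sigma}\cdot\mathcal{O}_S(3)$, which yields
\[
\T^1_S\;\cong\;\mathcal{O}_S(3)\big/\bigl(I_{\Sigma}\cdot\mathcal{O}_S(3)\bigr)\;=\;\mathcal{O}_{\mathbb{P}^3}(3)|_{\Sigma},
\]
which I denote $\mathcal{O}_\Sigma(3)$.

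Next, I would compute $H^1(\Sigma,\mathcal{O}_\Sigma(3))$ via the partial normalization $\pi\colon\widetilde{\Sigma}=L_{01}\sqcup L_{02}\sqcup L_{12}\to\Sigma$. Since at $P$ the three branch values get identified to a single common value, one obtains the short exact sequence
\[
0\to\mathcal{O}_\Sigma\to\pi_*\mathcal{O}_{\widetilde{\Sigma}}\to\mathcal{O}_P^{\oplus 2}\to 0.
\]
Twisting by $\mathcal{O}(3)$, applying cohomology, and using $H^1(\mathbb{P}^1,\mathcal{O}_{\mathbb{P}^1}(3))=0$ on each component of $\widetilde{\Sigma}$, the vanishing $H^1(\Sigma,\mathcal{O}_\Sigma(3))=0$ reduces to the surjectivity of
\[
\bigoplus_{ij}H^0\bigl(L_{ij},\mathcal{O}_{L_{ij}}(3)\bigr)\longrightarrow\mathbb{C}^{\oplus 2},
\]
which is immediate because each evaluation $H^0(L_{ij},\mathcal{O}(3))\to\mathbb{C}$ at $P$ is already surjective.

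The main obstacle is the identification $\T^1_S\cong\mathcal{O}_\Sigma(3)$: one must verify that the Jacobian ideal really coincides with $I_{\Sigma}$ — this is the primary-decomposition step, which forces the scheme structure on $\Sigma$ to be \emph{reduced} even at the triple point $P$ — and then check that the local trivializations on the charts $U_k$ glue to the asserted line bundle $\mathcal{O}_{\mathbb{P}^3}(3)|_{\Sigma}$. Once this is settled, the cohomological vanishing is a short Mayer--Vietoris-type argument on a genus-zero singular curve.
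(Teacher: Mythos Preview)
Your proof is correct and follows the same overall architecture as the paper's: reduce to the type $N$ case, identify $\T^1_S$ with the restriction of $\mathcal{O}_{\mathbb{P}^3}(3)$ to the singular locus, and then show the relevant $H^1$ vanishes. The differences are in how each of the two nontrivial steps is carried out. For the identification $\T^1_S\cong\mathcal{O}_\Sigma(3)$, the paper invokes Friedman's general results on normal crossing varieties (his $\mathcal{O}_\Delta(S)$ formalism), whereas you compute the Jacobian ideal directly and check by primary decomposition that it cuts out the reduced singular locus; both yield the same line bundle since $\deg S=3$. For the vanishing of $H^1(\mathcal{O}_\Sigma(3))$, the paper appeals to Alexeev's theorem on polarized stable toric varieties, while you run the normalization sequence by hand --- which is precisely the alternative the paper points to in the remark immediately following its proof. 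Your route is more elementary and fully self-contained, at the cost of being specific to this particular $S$; the paper's citations would scale to more complicated normal-crossing degenerations without additional work.
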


\begin{proof}
Suppose the pair is of type $A_1^k$ (resp. $S$ is smooth).
By \cref{rmk:cotangent_cpx}~(2), the sheaf $\T_S^1$ is supported on the singular locus of $S$, which is $0$-dimensional (resp. empty), so $H^1(\T_{S}^1)=0$.

For $(S,\mathbf{b}D)$ of type $N$, let $\Delta=\Delta_0\cup\Delta_1\cup\Delta_2\subseteq S$ be the double locus. We have that
\[
\T_S^1\cong\mathcal{O}_\Delta(S)\cong\mathcal{O}_{\mathbb{P}^3}(S)|_\Delta,
\]
where the first and second isomorphisms follow by \cite[Proposition~2.3]{friedman1983global} and \cite[Lemma~1.11]{friedman1983global}, respectively (see \cite[Definition~1.9]{friedman1983global} for the definition of $\mathcal{O}_\Delta(S)$). So, we want to prove that $H^1(\mathcal{O}(S)|_\Delta)=0$.

We have that $\Delta$ is a seminormal variety by \cite[Example~10.12]{kollar2013singularities}. Moreover, $\Delta$ together with the ample line bundle $\mathcal{O}(S)|_\Delta$ can be viewed as a polarized stable toric variety in the sense of \cite[Definition~1.1.5 and Definition~1.1.8]{alexeev2002}. So, the cohomology group $H^1(\mathcal{O}(S)|_\Delta)$ is zero by \cite[Theorem~2.5.1-1]{alexeev2002}.
\end{proof}

\begin{remark}
In the proof of \cref{lem:11_vanishing}, the vanishing of $H^1(\mathcal{O}(S)|_\Delta)$ can also be seen directly using the long exact sequence associated to the short exact sequence in \cite[Chapter~II, Exercise~1.19~(c)]{hartshorne1977algebraic}, where we take $X:=\Delta$, $Z:=\Delta_0$, and $\mathscr{F}:=\mathcal{O}(S)|_\Delta$.
\end{remark}

\begin{lemma}
\label{lem:20_vanishing}
For all pairs $(S,\mathbf{b}D)$ parametrized by $\Msp$, we have that $H^2(\T^0_S)=0$.
\end{lemma}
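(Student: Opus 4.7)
The plan is to exploit the cotangent complex description of the LCI hypersurface $S\subseteq\mathbb{P}^3$. Writing $F$ for the defining cubic, $\mathbb{L}_{S/\mathbb{C}}$ is quasi-isomorphic to the two-term complex $[\mathcal{O}_S(-3)\to\Omega_{\mathbb{P}^3}|_S]$ concentrated in degrees $[-1,0]$. Applying $\mathcal{H}om(-,\mathcal{O}_S)$ and taking cohomology sheaves produces the four-term exact sequence
\[
0\to\T_S^0\to T_{\mathbb{P}^3}|_S\xrightarrow{\partial F}\mathcal{O}_S(3)\to\T_S^1\to 0,
\]
which I split into the two short exact sequences
\[
0\to\T_S^0\to T_{\mathbb{P}^3}|_S\to\mathcal{F}\to 0,\qquad 0\to\mathcal{F}\to\mathcal{O}_S(3)\to\T_S^1\to 0,
\]
where $\mathcal{F}$ denotes the image of the Jacobian map.

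Next I establish $H^1(T_{\mathbb{P}^3}|_S)=H^2(T_{\mathbb{P}^3}|_S)=0$ by combining the restricted Euler sequence $0\to\mathcal{O}_S\to\mathcal{O}_S(1)^{\oplus 4}\to T_{\mathbb{P}^3}|_S\to 0$ with the elementary vanishings of $H^i(\mathcal{O}_S(k))$ for $k\in\{0,1\}$ and $i\in\{1,2\}$ obtained from the ideal sequence $0\to\mathcal{O}_{\mathbb{P}^3}(k-3)\to\mathcal{O}_{\mathbb{P}^3}(k)\to\mathcal{O}_S(k)\to 0$. Plugging these into the long exact cohomology sequence of the first short exact sequence yields $H^2(\T_S^0)\cong H^1(\mathcal{F})$. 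Analyzing the second short exact sequence, together with the vanishing $H^1(\mathcal{O}_S(3))=0$ (obtained again from the ideal sequence for $k=3$) and $H^1(\T_S^1)=0$ from \cref{lem:11_vanishing}, reduces the problem to verifying surjectivity of the map
\[
\phi\colon H^0(\mathcal{O}_S(3))\longrightarrow H^0(\T_S^1).
\]

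Finally I verify this surjectivity case by case, using the reduction in the remark preceding \cref{thm:smoothstack}. When $S$ is smooth, $\T_S^1=0$ and the claim is vacuous. When $(S,\mathbf{b}D)$ is of type $A_1^k$, the sheaf $\T_S^1$ is a skyscraper $\bigoplus_{i=1}^k\mathbb{C}_{p_i}$ supported at the $k\leq 4$ nodes, and, because $\partial F$ vanishes at the nodes, $\phi$ reduces to the evaluation map $G\mapsto(G(p_i))_{i=1}^k$; surjectivity is then immediate since cubics on $\mathbb{P}^3$ separate any four distinct points. When $(S,\mathbf{b}D)$ is of type $N$, the identification $\T_S^1\cong\mathcal{O}_\Delta(3)$ used already in \cref{lem:11_vanishing} realizes $\phi$ (after lifting through the surjection $H^0(\mathcal{O}_{\mathbb{P}^3}(3))\twoheadrightarrow H^0(\mathcal{O}_S(3))$) as the restriction $H^0(\mathcal{O}_{\mathbb{P}^3}(3))\to H^0(\mathcal{O}_\Delta(3))$ to the reducible curve $\Delta=\Delta_0\cup\Delta_1\cup\Delta_2$. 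Surjectivity follows by exhibiting explicit cubics on $\mathbb{P}^3$ restricting to any compatible triple of binary cubics on the three coordinate axes meeting at $[0:0:0:1]$, using the diagonal degree-$3$ monomials $x_0^i x_3^{3-i}$, $x_1^i x_3^{3-i}$, $x_2^i x_3^{3-i}$ as building blocks.

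I expect the main obstacle to be the type $N$ case: one must leverage the explicit description of $\T_S^1$ on the non-normal surface and produce enough cubics to surject onto the global sections of $\mathcal{O}_\Delta(3)$ on the reducible curve with its triple point. The remaining steps reduce to standard cohomology chases on the ambient cubic hypersurface.
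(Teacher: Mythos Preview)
Your argument is correct and takes a genuinely different route from the paper's proof. The paper treats the two situations separately: for $S$ normal (smooth or type $A_1^k$) it invokes Manetti's lemma on normal surfaces with $q=p_g=0$ and $h^0(-K_S)>0$; for type $N$ it passes to the irreducible components via the sequence $0\to\bigoplus_i\T^0_{S_i}(-\sum_{j\neq i}\Delta_j)\to\T_S^0\to\underline{\mathrm{coker}}\to 0$ and then computes $H^2(\T^0_{S_i}(-\sum_{j\neq i}\Delta_j))$ by Serre duality and Bott's formula. You instead use the conormal (Jacobian) sequence of the hypersurface $S\subset\mathbb{P}^3$ uniformly in all cases, reducing everything to the surjectivity of $H^0(\mathcal{O}_S(3))\to H^0(\T_S^1)$, which you then check by hand. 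Your approach is more elementary and self-contained, avoiding the external citations; the paper's approach is more structural and does not require identifying the map $\mathcal{O}_S(3)\to\T_S^1$ explicitly. Two minor remarks: the vanishing $H^1(\T_S^1)=0$ you cite from \cref{lem:11_vanishing} is actually superfluous for your reduction, since $H^1(\mathcal{O}_S(3))=0$ alone already gives $H^1(\mathcal{F})\cong\mathrm{coker}(\phi)$; and in the type $N$ case it is worth noting explicitly that the surjection $\mathcal{O}_S(3)\to\T_S^1$ factors through restriction to $\Delta$ (since $\T_S^1$ is supported there) and that the induced surjection $\mathcal{O}_\Delta(3)\twoheadrightarrow\T_S^1$ between line bundles on $\Delta$ is therefore an isomorphism, which is what makes $\phi$ literally the restriction map.
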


\begin{proof}
    If the pair $(S,\mathbf{b}D)$ is of type $A_1^k$ or $S$ is smooth, then the normal surface $S$ satisfies $q=p_g=0$ and $h^0(\mathcal{O}_S(-K_S))>0$. Moreover, by \cref{rmk:cotangent_cpx}~(1), $\T^0$ is the tangent sheaf of $S$. Therefore, we can apply \cite[Lemma~11]{Manetti1993} to conclude that $H^2(\T^0_S)=0$.

    For type $N$, write the double locus $\Delta\subseteq S=\cup_{i=0}^2S_i$ as $\cup_{i=0}^2\Delta_i$ with $\Delta_i=\cap_{j\neq i}S_j$ as at the end of \cref{Subsection:stable_pairs_para}. Consider the short exact sequence
    \[
        0\rightarrow\bigoplus_{i=0}^2\mathcal{O}_{S_i}(-\sum_{j\neq i}\Delta_j)\rightarrow\mathcal{O}_S\rightarrow\mathcal{O}_{\Delta}\rightarrow 0.
    \]
    After applying $\sHom_{\mathcal{O}_S}(\Omega_S^1,-)$ to the above and considering the cokernel of the resulting first non-trivial map, we obtain the short exact sequence
    \[
        0\rightarrow\bigoplus_{i=0}^2\T^0_{S_i}(-\sum_{j\neq i}\Delta_j)\xrightarrow{\varphi}\T_S^0\rightarrow\underline{\mathrm{coker}}(\varphi)\rightarrow0.
    \]
    Then, we take the associated long exact sequence in cohomology, obtaining in particular:
    \[
    \ldots\rightarrow\bigoplus_{i=0}^2H^2(\T_{S_i}^0(-\sum_{j\neq i}\Delta_j))\rightarrow H^2(\T_S^0)\rightarrow H^2(\underline{\mathrm{coker}}(\varphi))\rightarrow\ldots
    \]
    We have that $H^2(\underline{\mathrm{coker}}(\varphi))=0$ because $\underline{\mathrm{coker}}(\varphi)$ is supported on $\Delta$, which is one-dimensional. So, it is enough to show that $H^2(\T_{S_i}^0(-\sum_{j\neq i}\Delta_j))$ for $i=0,1,2$. For simplicity, let us assume $i=0$, as the calculation is analogous for $i=1,2$. By Serre's duality, we have that
    \begin{align*}
        H^2(\T^0_{S_0}(-\Delta_1-\Delta_2))&\cong H^0(\mathcal{O}_{S_0}(K_{S_0})\otimes\T_{S_0}^0(-\Delta_1-\Delta_2)^\vee)^\vee\\
        &\cong H^0(\mathcal{O}_{\mathbb{P}^2}(-3)\otimes\Omega_{\mathbb{P}^2}^1(2))^\vee\\
        &\cong H^0(\Omega_{\mathbb{P}^2}^1(-1))^\vee\\
        &\cong0,
    \end{align*}
    where the last isomorphism follows from Bott's formula \cite[Page~4]{okonek1980vector} or, alternatively, from the Euler sequence \cite[Chapter~\rom{2}, Theorem~8.13]{hartshorne1977algebraic}.
\end{proof}

\subsection{Conclusive remarks}

\begin{remark}
In \cite[Theorem~5.17]{gallardo2021geometric}, it was shown that Naruki's compactification $\Mcr$ is isomorphic to the normalization of $\Msp$. As $\Msp$ is smooth by \cref{thm:smoothstack}, the normalization assumption can be removed. Additionally, by \cite[Theorem~1.4]{gallardo2021geometric} we have that $\Mcr$ is isomorphic to the toroidal compactification of an appropriate ball quotient (see \cite{allcock2002complex}). So, it follows that the KSBA compactification $\Msp$ is toroidal without need of normalizing it.
\end{remark}

\begin{remark}\label{Rmk:connectedcomponent}
As another consequence of \cref{thm:smoothstack}, we can give an alternative formulation of the moduli functor $\stackMsp$ in \cref{def:moduli-stack}. Roughly, instead of defining it as the closure of a substack inside a bigger moduli stack of pairs, we can view $\stackMsp$ as a connected component of it. More in detail, let us fix the dimension $d=2$, the weight vector $\mathbf{b}=(\frac{1}{9}+\epsilon,\ldots,\frac{1}{9}+\epsilon)\in\mathbb{Q}^{27}$ with $0<\epsilon\ll 1$, and the volume $(K_S+\mathbf{b}D)^2=243\epsilon^2$, where $S$ is a smooth cubic surface and $D$ is the sum of the $27$ lines. Using the notation in \cite[Section~8]{kollar2023families}, there is a moduli stack $\mathcal{SP}(\mathbf{b},2,243\epsilon^2)$ parametrizing all KSBA stable pairs with these numerical invariants. Let $\mathcal{X}$ be the substack of $\mathcal{SP}(\mathbf{b},2,243\epsilon^2)$ whose corresponding coarse moduli space $X$ is the connected component parametrizing, among others, the stable pairs $(S,\mathbf{b}D)$.

Since the lines do not have embedded deformations in $S$ (this is a consequence of \cite[Chapter~\rom{2}, Exercise~6.9~(a)]{hartshorne2010deformation}), the dimension of the deformation space of the pair $(S,\mathbf{b}D)$ equals the one of $S$. This dimension is given by $h^1(S,\T_S^0)$, which equals $4$ by \cite[Remark~3.13]{huybrechts2023geometry}. So, the irreducible component $X_0$ of $X$ parametrizing pairs $(S,\mathbf{b}D)$ is $4$ dimensional. Moreover, since we know that $X$ is smooth along $X_0$ by the work in \cref{sec:smooth-stack}, we have that $X$ is irreducible. Hence, $X$ coincides with the KSBA compactification $\Msp$ and $\mathcal{X}$ is the claimed alternative formulation of $\stackMsp$.
\end{remark}

\end{document}